\documentclass[11pt,letterpaper]{amsart}

\usepackage{graphicx, psfrag}
\usepackage{amsmath, amscd, amssymb}
\usepackage{graphpap, color}
\usepackage[mathscr]{eucal}
\usepackage{cancel}
\usepackage{verbatim}
\usepackage{adjustbox}
\usepackage{tikz}
\usepackage{float}
\usepackage{booktabs}
\usepackage{multirow}
\usepackage{tabularx}
\usepackage{bbm}
\usepackage{stmaryrd}
\usepackage{mathtools}
\usepackage{subcaption}

\usepackage{listings}

\numberwithin{equation}{section}

\newtheorem{theorem}{Theorem}[section]
\newtheorem{corollary}[theorem]{Corollary}

\newtheorem{conjecture}[theorem]{Conjecture}

\theoremstyle{definition}
\newtheorem{definition}[theorem]{Definition}

\newtheorem{remark}[theorem]{Remark}

\newtheorem{ques}[theorem]{Question}

\newcommand{\PP}{\mathbb{P}}

\def\CC{\mathbb{C}}

\def\EE{\mathbb{E}}

\def\QQ{\mathbb{Q}}

\def\RR{\mathbb{R}}

\def\ZZ{\mathbb{Z}}

\def\sT{\mathscr{T}}

\newcommand{\cal}{\mathcal}

\def\cM{{\cal M}}
\def\cN{{\cal N}}

\def\cS{{\cal S}}

\def\hbar{\overline{h}}

\def\ocM{\overline{\cM}}
\def\Mbar{\overline{\cM}}
\def\mzn{\overline{\cM}_{0,n} }

\def\mgn{\overline{\cM}_{g,n}}

\def\dim{\mathrm{dim} }

\def\log{\mathrm{log} }

\def\and{\quad{\rm and}\quad}
\def\lra{\longrightarrow }

\def\beq{\begin{equation}}
\def\eeq{\end{equation}}
\def\ben{\begin{enumerate}}
\def\een{\end{enumerate}}

\def\DM{Deligne-Mumford }

\def\and{\quad\text{and}\quad}

\title{Cohomology of moduli spaces of pointed curves}

\author{Jinwon Choi}
\address{Department of Mathematics and Research Institute of Natural Sciences, Sookmyung Women's University, Seoul 04310, Korea}
\email{jwchoi@sookmyung.ac.kr}

\author{Young-Hoon Kiem}
\address{School of Mathematics, Korea Institute for Advanced Study, 85 Hoegiro, Dongdaemun-gu, Seoul 02455, Korea}
\email{kiem@kias.re.kr}

\date{}

\begin{document}

\begin{abstract}
    In this paper, after reviewing recent progress on the cohomology of $\mzn$, we further our investigation on the cohomology of moduli spaces of pointed curves in continuation of \cite{BK, CK0, CK, CKL1, CKL2, CKL3}. In particular, we prove that the Betti number distribution of the Fulton-MacPherson compactification $C[n]$ of the space of $n$ ordered distinct points on any smooth projective curve $C$ is asymptotically Gaussian as $n$ goes to infinity.
\end{abstract}
\maketitle

\section{Introduction}\label{S1}

The Deligne-Mumford moduli space of stable pointed curves $\mgn$ is one of the most fundamental objects in algebraic geometry \cite{DM}. It parameterizes a connected projective curve $C$ of arithmetic genus $g$ with $n$ ordered points $p_1,\cdots,p_n$ in $C$ satisfying the stability conditions:
\begin{enumerate}
    \item[(i)] $C$ has at worst nodal singularities and $p_i$ are smooth distinct points;
    \item[(ii)] $\omega_C^\log=\omega_C(p_1+\cdots+p_n)$ is an ample line bundle.
\end{enumerate}
It is well known that $\mgn$ is a smooth irreducible proper \DM stack of finite type and admits a projective coarse moduli space.
It serves as the model for moduli spaces of higher dimensional stable varieties and plays a major role in enumerative geometry and related areas.
Naturally, much efforts have been exerted for more than fifty years to understand the geometry and topology of $\mgn$ but we still do not have satisfactory answers to immediate questions like the following.
\begin{ques}\label{0}
\begin{enumerate}
\item What are the Betti numbers $\dim\, H^k(\mgn)$?
\item Can we describe $H^k(\mgn)$ as an $S_n$-representation where the symmetric group $S_n$ acts on $\mgn$ by permuting the marked points?
\item Does the sequence $\{\dim\, H^k(\mgn)\}_{0\le k\le 6g-6+2n}$ of Betti numbers satisfy interesting properties like log-concavity?
\end{enumerate}
\end{ques}
For $g=0$, the cohomology is much better understood than higher genus cases. We know a recursive formula \eqref{8} for the Betti numbers and explicit ring generators \eqref{10} as well as relations since 1990s from \cite{Getzler, Keel, Manin}. Recently, a new inductive construction of $\mzn$ was discovered in \cite{CK0} which led to  closed and inductive formulae for the $S_n$-representation $H^k(\mzn)$ in \cite{CKL1, CKL2}. Moreover, the even degree Betti numbers of $\mzn$ form an ultra-log-concave sequence as its (even degree) Poincar\'e polynomial is real-rooted by \cite{BK}. For $n$ large enough, the sequence is even 3-ultra-log-concave but not 4-ultra-log-concave by \cite{CK}.
In \S\ref{S2} and \S\ref{S3}, we will review these recent progress.

\medskip

A popular strategy for the geometry and topology of $\mgn$ is to use the forgetful morphism
$$\pi:\mgn\lra \ocM_g,\quad (C,p_1,\cdots,p_n)\mapsto {C}^s$$
where ${C}^s$ is the stabilization of $C$ after forgetting the marked points.
By stratifying $\ocM_g$ and applying the decomposition theorem of Beilinson, Bernstein, Deligne and Gabber to $R\pi_*\QQ$, we may extract cohomological information about $\mgn$ from the base $\ocM_g$ and the fibers of $\pi$.
Over a smooth curve $C\in \cM_g$, the fiber of $\pi$ is the Fulton-MacPherson compactification $C[n]$ of the space
\beq\label{1}
C[n]^\circ =C^n-\bigcup_{i<j} \Delta_{ij}, \quad \Delta_{ij}=\{(p_1,\cdots,p_n)\in C^n\,|\,p_i=p_j\}
\eeq
of $n$ ordered distinct points in $C$ (cf. \cite{FM}). We call $C[n]$ the \emph{FM space} for short.

Although the Betti numbers of $C[n]$ can be explicitly computed by the recipe in \cite{FM} for low $n$, they grow exponentially fast as $n$ grows and it is hard even to find  their effective estimates for large $n$.
Thus it makes sense to consider the asymptotic behavior of the Betti number distribution of $C[n]$.
In this paper, we prove the following theorems which generalize \cite[Theorem 4.1, Corollary 4.2]{CK} to arbitrary smooth projective curves. (See \S\ref{S4}.)

\begin{theorem}\label{2}
For any connected smooth projective curve $C$, the Betti numbers of the FM space $C[n]$ are asymptotically normally distributed with mean $\mu_n=n$ and variance
\beq\label{3} \sigma_n^2=\frac{2(3-e)}{3(e-2)}n+O(1).\eeq
\end{theorem}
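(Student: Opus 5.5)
The plan is to reduce the statement to a singularity analysis of an exponential generating function for the Poincar\'e polynomials $P_{C[n]}(q)$, and then to invoke Hwang's quasi-powers theorem, exactly as in \cite[Theorem 4.1]{CK} for $\mzn$.

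First I will write down the generating function. The FM space $C[n]$ is smooth and projective, so its Poincar\'e polynomial equals its virtual Poincar\'e (weight) polynomial. The latter is additive over locally closed stratifications. The FM boundary stratification of $C[n]$ is indexed by forests of rooted trees with leaves labelled by $[n]$. The root vertices record distinct points of $C$, so a stratum with $k$ roots fibres over the configuration space $C[k]^\circ$. The fibres are products over the internal vertices of open screen spaces, i.e.\ open moduli spaces $\cM_{0,m+1}$. These fibrations are Zariski locally trivial (they are iterated blowups of $C^n$), so virtual Poincar\'e polynomials multiply along them.

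The exponential formula then gives
$$\sum_{n\ge 0} P_{C[n]}(q)\frac{x^n}{n!}=\Phi_C\big(G(x,q)\big),\qquad \Phi_C(y)=\sum_{k\ge0}E(C[k]^\circ)\frac{y^k}{k!}=(1+y)^{E(C)}.$$
Here $E(C)=1-2g\,q^{1/2}+q$ in the grading where $q$ has degree $2$ (only even degrees will survive). The function $G(x,q)=x+\sum_{m\ge2}P_{\ocM_{0,m+1}}(q)\,\frac{x^m}{m!}$ is, up to the usual shift by the normal direction, the generating function already analysed in \S\ref{S2}--\S\ref{S3}, through \eqref{8} and the inverse-function description of $\mzn$ used in \cite{CK}. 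Equivalently, I could derive $G$ directly from the relation that a tree of screens satisfies. Whichever form is cleaner, I expect $G$ to be determined implicitly by an equation of the type $x=\psi(G,q)$ with $\psi$ analytic.

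Next I will locate the dominant singularity. For $q$ near $1$, $G(\cdot,q)$ has a square-root singularity at some $\rho(q)$ where $\partial_G\psi=0$, and at $q=1$ this is the singularity that produces the constant $e$ in \cite{CK}. Composing with $(1+y)^{E(C)}$ does not move this singularity, provided $1+G(x,q)$ stays away from $0$ on $|x|\le\rho(q)$. The first key check is therefore that the singularity of the outer function $(1+y)^{E(C)}$ at $y=-1$ is not reached inside the disk of convergence of $G$. Under that check the genus enters only through an analytic prefactor.

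Then singularity analysis gives
$$[x^n]\,\Phi_C(G(x,q))\sim c(q)\,n^{-3/2}\,\rho(q)^{-n}\,n!,$$
uniformly for $q$ in a complex neighbourhood of $1$. This is a quasi-power with $\beta_n=n$. Hwang's theorem then yields asymptotic normality, with mean $-\frac{\rho'(1)}{\rho(1)}n+O(1)$ and variance $\big(\text{the second log-derivative of }\rho^{-1}\text{ at }1\big)n+O(1)$. Both constants depend only on $G$ and not on $C$, so they coincide with those computed in \cite{CK} for $\mzn$. This explains the universal variance constant $\frac{2(3-e)}{3(e-2)}$.

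The mean is in fact exactly $n$ by Poincar\'e duality, since $\dim_\CC C[n]=n$ and the Betti numbers are symmetric about degree $n$. Hwang's theorem also requires the variance constant to be nonzero, which will follow from the explicit value.

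The main obstacle is making the stratification bookkeeping exact. This means getting the right normalisation of $G$ (which power of $q$ each screen contributes, and whether the factor is $\ocM_{0,m+1}$ or its open part), and verifying the uniform analytic control. In particular, I must show that $\rho(q)$ is the unique dominant singularity on its circle and that $1+G$ has no zeros there for $q$ near $1$; this nonvanishing is where $g$ could a priori interfere. I will first test the formula against $C=\PP^1$ and small $n$ computed from \cite{FM} to pin down the normalisation.
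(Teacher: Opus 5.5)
Your route is essentially the paper's. You use the generating function $(1+\varphi)^{E(C)}$; the paper simply quotes it from Manin as \eqref{26}, while your stratification plus exponential formula rederives it, and for curves $G$ is exactly $\varphi$ of \eqref{5}, with no shift. You then use the square-root singularity of $\varphi$ at $\rho(u)$ coming from \eqref{6}, transfer to $[z^n]\psi$, and the quasi-powers theorem.

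\textbf{The gap.} The step fixing the exponent $E(C)$ is wrong as written, and it destroys the probabilistic input to Theorem \ref{22}. You take $E(C)=1-2g\,q^{1/2}+q$ while also saying the Poincar\'e polynomial equals the weight polynomial. These use incompatible sign conventions. You also assert that only even degrees survive, which is false for $g\ge 1$: $C[n]\to C^n$ is a composition of blowups along smooth centres, so $b_1(C[n])=b_1(C^n)=2gn$. Since $G$ is even in $u=q^{1/2}$, your $(1+G)^{1-2gu+u^2}$ equals $\psi(x,-u)$. It therefore generates $\sum_k(-1)^k\dim H^k(C[n])\,u^k$, not the Poincar\'e polynomials. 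This has two consequences.
\begin{enumerate}
\item Its coefficients are not nonnegative, so they define no probability distribution.
\item At $u=1$ your amplitude $c$ carries the factor $E(C)|_{u=1}=2-2g$. For $g=1$ this vanishes: $(1+G)^{(1-u)^2}$ is identically $1$ at $u=1$. The normalisation $p_n(1)=1$, $A(1)=1$ required in Theorem \ref{22} then cannot even be set up.
\end{enumerate}

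\textbf{The repair.} Use purity: $C$ and $C[n]$ are smooth projective, so $H^k$ is pure of weight $k$. Work with $\sum_{j,k}(-1)^{j+k}\dim\mathrm{Gr}^W_kH^j_c(X)\,u^k$, which is your specialisation with $u\mapsto -u$. It is still additive and multiplicative over your strata, and it returns honest Poincar\'e polynomials. Then $E(C)=1+2gu+u^2$, you land exactly on \eqref{26}, and the amplitude at $u=1$ is a nonzero multiple of $2+2g$.

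Because odd degrees genuinely occur, run the quasi-powers argument in the degree-one variable $u$, not in $q$. Since $\rho$ depends only on $u^2$, the leading variance is four times the constant $\frac{3-e}{6(e-2)}$ of \cite{CK}, which is measured in $t=u^2$. That factor of four is how $\frac{2(3-e)}{3(e-2)}$ arises, so your sentence that the constants ``coincide'' with those of \cite{CK} needs it. Your Poincar\'e-duality argument for the exact mean $n$ is fine.
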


\begin{theorem}\label{2_1}
 For any connected smooth projective curve $C$, let $\beta_{n,k}= \dim H^k(C[n])$.
 The subsequences of even Betti numbers $\{\beta_{n,2i}\}$ and odd Betti numbers $\{\beta_{n,2i+1}\}$ are asymptotically 3-ULC but not 4-ULC in the central range $|i - \frac{n}{2}|=O(\sqrt{n})$.
\end{theorem}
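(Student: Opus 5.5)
The plan is to reduce Theorem~\ref{2_1} to a \emph{local} limit theorem for each parity subsequence. The ULC comparison is then an explicit computation with the constant in \eqref{3}, following the argument of \cite[Corollary 4.2]{CK} for $\mzn$.

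First, I will write the Poincar\'e polynomial of $C[n]$ via the Fulton--MacPherson recipe \cite{FM}. I will stratify $C[n]$ by the combinatorial type of the screens. This expresses the exponential generating function $\sum_n P_{C[n]}(t)x^n/n!$ as a composition: the generating function of $C[k]^\circ$-type pieces, governed by $P_C(t)=1+2g t+t^2$, composed with the generating function for the FM spaces of a point, which are built from $\mzn$-type factors. I will then split
$$P_{C[n]}(t)=E_n(t^2)+t\,O_n(t^2).$$
Odd classes come only from factors $2g\,t$. So $E_n$ and $O_n$ are obtained from the bivariate generating function by taking the parts even and odd in the $H^1(C)$-variable, which can be done by substituting $g\mapsto\pm$ and averaging. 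Both pieces then inherit the same dominant singularity in $x$, moving analytically with $t$ near $t=1$. This is the source of the constant $e$ in \eqref{3}.

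Second, I will apply a quasi-powers/saddle-point analysis to $E_n$ and $O_n$ separately, in the style of the proof of Theorem~\ref{2}. The goal is an Edgeworth-type local expansion, uniform in the central range $|i-n/2|=O(\sqrt n)$:
$$\beta_{n,2i}=\frac{E_n(1)}{\sqrt{2\pi}\,s_n}\,e^{-x^2/2}\Bigl(1+\frac{c_3 H_3(x)}{\sqrt n}+O(n^{-1})\Bigr),\qquad x=\frac{i-n/2}{s_n},$$
with the analogous expansion for $\beta_{n,2i+1}$. Here $s_n^2=\sigma_n^2/4+O(1)$ is the variance on the index scale $i$. The parity split does not change the leading variance, because the singular exponent is the same for both parts; only the $O(1)$ terms differ.

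Third, I will take second differences of $\log\beta_{n,2i}$. The expansion gives
$$\log\frac{\beta_{n,2i}^2}{\beta_{n,2i-2}\beta_{n,2i+2}}=\frac{1}{s_n^2}+O(n^{-3/2})=\frac{6(e-2)}{(3-e)\,n}+O(n^{-3/2}).$$
The $r$-ULC condition of \cite{CK} at $i\approx n/2$ requires this quantity to be at least $\log$ of the binomial correction factor. On the index scale $i$ ranging up to $n$, that logarithm is asymptotically $\kappa(r)/n$ with explicit $\kappa(r)$ increasing in $r$. Comparing $6(e-2)/(3-e)\approx15.3$ with $\kappa(3)$ and $\kappa(4)$ should give strict inequalities on either side, exactly as in the $\mzn$ case. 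These inequalities yield asymptotic 3-ULC for both subsequences, and a failure of 4-ULC, which can be exhibited at $i=\lfloor n/2\rfloor$.

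The main obstacle is the second step. A CLT for the whole sequence, as in Theorem~\ref{2}, does not suffice. I need a local expansion for each parity subsequence, with relative error $o(1/n)$ in the second logarithmic difference. This requires that the bivariate generating functions for $E_n$ and $O_n$ have a unique dominant singularity on $|t^2|=1$ after the parity split. In particular, there must be no competing singularity at $t^2=-1$ coming from the factor $1+t^2$, so that the standard Hwang-type uniform estimates apply. I would verify this first by direct inspection of the singularity for $t$ near $1$ and near $-1$.
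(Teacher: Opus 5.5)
Your outline follows the paper's route. Your Step 1 is \eqref{26}, and the parity split via $g\mapsto -g$ is the paper's $\psi(z,u)\pm\psi(z,-u)$, because $\varphi$ is even in $u$. Then come $w=u^2$, the common singularity $\tilde\rho(w)=\rho(\sqrt w)$, Hwang's local limit theorem, and a comparison of second log-differences with those of $\binom{n}{i}^r$.

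The genuine gap is the error term in your Step 2. From
\[
\beta_{n,2i}=\frac{E_n(1)}{\sqrt{2\pi}\,s_n}e^{-x^2/2}\Bigl(1+\frac{c_3H_3(x)}{\sqrt n}+O(n^{-1})\Bigr)
\]
you cannot obtain the $O(n^{-3/2})$ claimed in Step 3. The remainder is only known to be $O(n^{-1})$ pointwise, so its contribution to $\Delta_i^2\log\beta_{n,2i}$ is a priori $O(n^{-1})$. That is the same order as the main term $1/s_n^2$.

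This matters because $-\Delta_i^2\log\binom{n}{i}=\log\frac{(i+1)(n-i+1)}{i(n-i)}=\frac{4}{n}+O(n^{-2})$ in the central range. So $\kappa(r)=4r$, and the margins you must resolve are $\bigl(\frac{6(e-2)}{3-e}-12\bigr)/n\approx 3.3/n$ for 3-ULC and only $\bigl(16-\frac{6(e-2)}{3-e}\bigr)/n\approx 0.7/n$ for the failure of 4-ULC.

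What you need, and what the paper uses via Theorem~\ref{LLT}, is the expansion one order further,
\[
1+\frac{\Pi_1(x)}{\sqrt n}+\frac{\Pi_2(x)}{n}+O(n^{-3/2}),
\]
with $\Pi_1,\Pi_2$ polynomials. Since $x$ moves in steps of $1/s_n$, the second differences of $\Pi_1(x)/\sqrt n$ and $\Pi_2(x)/n$ are $O(n^{-3/2})$ and $O(n^{-2})$. Only then does $\Delta_i^2\log\beta_{n,2i}=-1/s_n^2+O(n^{-3/2})$ follow. Hwang's theorem gives this precision from the quasi-power form with uniform $O(n^{-1})$ error plus aperiodicity. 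So invoke it at that level rather than through a one-term Edgeworth expansion.

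Two smaller points.

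First, the aperiodicity condition is global on the arc $\varepsilon\le|\arg w|\le\pi$. Inspecting the singularity near $t=\pm1$, i.e.\ near $w=1$, does not suffice, and $t^2=-1$ plays no special role. What you need is $|\tilde\rho(w)|>e-2$ for $|w|=1$, $w\ne 1$, together with $\tilde\rho$ controlling the radius of convergence there. Concretely, for $w=e^{i\theta}$ one finds $|\tilde\rho(e^{i\theta})|=e^{x\cot x}-2\cos x$ with $x=|\theta|/2$, which increases from $e-2$ at $x=0$ to $1$ at $x=\pi/2$.

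Second, sharing the singularity is not quite enough for the quasi-power form. The amplitudes of $(1+\varphi)^{u^2\pm 2gu+1}$ at $u=1$ are $(2\pm 2g)e^{\pm 2g+3/2}$. You need their sum (even part) or difference (odd part) to be nonzero, so that $v(s)=\log\tilde A(e^s)$ is analytic at $s=0$. This holds except for the odd part when $g=0$. There the odd Betti numbers vanish identically, so that half of the statement is degenerate.
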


Recall that the normal distribution with mean $\mu$ and variance $\sigma^2$ is defined by the Gaussian density function
\beq\label{4} \frac{1}{\sqrt{2\pi}\sigma} \exp\left(-\frac{(x-\mu)^2}{2\sigma^2}\right).\eeq
Typically, normal distributions arise from the sum of identical and  independent random variables and it is rather mysterious why the Betti sequence of $C[n]$ converges to the normal distribution for any curve $C$. Also we do not know the geometric meaning of the variance \eqref{3}.

\medskip

\noindent\textbf{Convention}. All stacks and schemes in this paper are defined over the complex number field $\CC$. All cohomology groups have rational coefficients.

\medskip

\noindent \textbf{Acknowledgement}.
It is our pleasure to thank the organizers of the 2025 Algebraic Geometry in East Asia conference, held in Tainan, for warm hospitality. JC was partially supported by the National Research Foundation of Korea(NRF) grant (RS-2026-25486827).  

\bigskip
\section{Cohomology of $\mzn$}\label{S2}
In this section, we review recent progress on the cohomology of $\mzn$ from \cite{CKL1, CKL2, CKL3} after recalling classical results.

\subsection{Classical results}
In this subsection, we recall fundamental results about $H^*(\mzn)$ from 1990s in the chronological order.

In 1992, Keel discovered an inductive construction of $\mzn$. In fact, $\ocM_{0,n+1}$ is obtained by a sequence of smooth blowups from $\mzn\times \PP^1$. By using the blowup formula, he showed that
$$H^*(\mzn)\cong A^*(\mzn)\cong R^*(\mzn)$$
where $A^*$ and $R^*$ denote the Chow ring and the tautological ring respectively. In particular, all the odd degree cohomology groups vanish. Moreover, the ring is generated by the boundary divisors
\beq\label{10}
D_S=D_{S^c},\quad S\subset \{1,2,\cdots,n\},\ \  2\le |S|\le n-2\eeq
and the relation ideal is generated by
the relations pulled back from $\ocM_{0,4}\cong \PP^1$ and the obvious ones like $D_S\cdot D_{S'}=0$ if $D_S\cap D_{S'}=\emptyset$. See \cite{Keel}.

In 1993, Kapranov proved that there are a birational morphism
$$\rho:\ocM_{0,n+1}\lra \PP^{n-2}$$
and $n$ points $p_1,\cdots,p_n\in \PP^{n-2}$ in general position such that $\rho$ is the composition of blowups, first along the $n$ points $p_i$, second along the proper transforms of the $\binom{n}{2}$ lines $\overline{p_ip_j}$, next along the proper transforms of the $\binom{n}{3}$ planes $\overline{p_ip_jp_k}$ and so on.
The blowups are $S_n$-equivariant where $S_n\subset S_{n+1}$ is the subgroup fixing the last marked point. See \cite{Kapra}. In particular, $\ocM_{0,n+1}$ is $S_n$-rational (i.e. $S_n$-equivariantly birational to $\PP^{n-2}$) but not $S_{n+1}$-rational (simply because $S_{n+1}$ is not a subgroup of $\mathrm{Aut}(\PP^{n-2})$).
In \cite[\S5]{CKL1}, using Kapranov's construction, the authors computed the graded $S_n$-module $H^*(\ocM_{0,n+1})$ as a sum over weighted rooted trees (cf. \cite[Proposition 5.12]{CKL1}).

In 1995, Getzler established an operadic duality between $H^*(\mzn)$ and $H^*(\cM_{0,n})$ where
\[ \cM_{0,n} = \left( (\PP^1)^n-\bigcup_{i<j}\Delta_{ij}\right)/\mathrm{PGL}_2(\CC), \quad   \Delta_{ij}=\{(p_1,\cdots,p_n)\,|\,p_i=p_j\}\]
is the moduli space of $n$ ordered distinct points in $\PP^1$ up to projective equivalence, which is open in $\mzn$.
As a consequence, he showed in \cite{Getzler} that the exponential generating function for the Poincar\'e polynomials of $\mzn$ and that for the virtual Poincar\'e polynomials of $\cM_{0,n}$ are inverse to each other as follows.

Let $b_{n,k}=\dim H^k(\mzn)$ and let
\beq\label{5} P_n(u)=\sum_{k\ge 0} b_{n,k}u^k, \quad \varphi(z,t)=z+\sum_{n\ge 2}\frac{z^n}{n!}P_{n+1}(u)\eeq
be the Poincar\'e polynomial of $\mzn$ and the exponential generating function of the Poincar\'e polynomials, respectively.
Using the smooth surjective morphism $\cM_{0,n+1}\to \cM_{0,n}$ whose fibers are $\PP^1$ minus $n$ points, it is easy to see that the virtual Poincar\'e polynomial $Q_{n+1}(u)$ of $\cM_{0,n+1}$ for $n\ge 3$ is
\[ (u^2-2)(u^2-3)\cdots (u^2-(n-1))=\frac{n!}{u^2(u^2-1)}\binom{u^2}{n}\]
with $Q_3(u)=1$ and thus we obtain
\[\phi(y,u)=y-\sum_{n\ge 2}\frac{y^n}{n!}Q_{n+1}(u)=\frac{u^4y-(1+y)^{u^2}+1}{u^2(u^2-1)}.\]
Getzler's duality tells us that $\phi(\varphi(z,u),u)=z$, so that we get an identity
\beq\label{6}
\frac{u^4\varphi(z,u)-(1+\varphi(z,u))^{u^2}+1}{u^2(u^2-1)}=z \ \ \in \ \QQ[u][\![z]\!].
\eeq
By expanding \eqref{6} as a series in $z$, we can compute the Poincar\'e polynomials $P_n(u)$.
In fact, it is straightforward to check that \eqref{6} is equivalent to the recursive formula
\beq\label{8}
P_{n+1}(u)=(1+u^2)P_n(u)+u^2\sum_{j=3}^{n-1}\binom{n-1}{j-1}P_j(u)P_{n+2-j}(u)
\eeq
which can be also derived from Keel's construction in \cite{Keel} or the new construction in \cite{CK0}.


\subsection{Recent progress}
Recently, based on the advances in moduli theory, new insights have flowed into the study of $\mzn$ which led to a new inductive construction.
 In \cite[Theorem 4.8]{CKL1}, the authors proved a formula
that compares $H^*(\mzn)$ and $H^*(\ocM_{0,n+1})$ as graded $S_n$-modules by using
the new inductive construction of $\mzn$ in \cite{CK0}.
Combining this with the formula \cite[Proposition 5.12]{CKL1} for the $S_n$-module $H^*(\ocM_{0,n+1})$ by Kapranov's construction, the authors obtained a closed formula for the graded $S_n$-representation $H^*(\mzn)$ as follows.

\begin{theorem}\label{9} \cite[Theorem 6.1]{CKL1} As an equality of virtual representations of $S_n$, we have
    \[
 H^{2k}(\mzn)=       \sum_{\substack{i\geq 0 \\ T\in \sT_{n,i}/S_n}}(-1)^{k-i} U_T
 \] \[
 +\frac{1}{2}\sum_{\substack{2\leq h \leq n-2\\ i+j\leq k-1}}\sum_{\substack{T_1\in \sT_{h,i}/S_h\\ T_2\in \sT_{n-h,j}/S_{n-h}\\T_1\neq T_2}}(-1)^{k-i-j}U_{T_1}.U_{T_2}
 +(-1)^k\sum_{\substack{2i\leq k-1\\T\in \sT_{\frac{n}{2},i}/S_{\frac{n}{2}}}}s_{(1,1)}\circ U_T,
\]
where $U_T:=\mathrm{Ind}^{S_n}_{\mathrm{Stab}(T)}e$, and $\sT_{n,i}$ denotes the set of weighted rooted trees with $n$ inputs and weight $i$. By convention, we define $\sT_{\frac{n}{2},i}$ to be empty for odd $n$.
\end{theorem}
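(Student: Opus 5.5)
The plan is to combine two earlier inputs as equalities in the representation ring of $S_n$: the comparison formula \cite[Theorem 4.8]{CKL1} between $H^*(\mzn)$ and $H^*(\ocM_{0,n+1})$, and the Kapranov-based formula \cite[Proposition 5.12]{CKL1} for $H^*(\ocM_{0,n+1})$ restricted to $S_n$.

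\emph{Step 1: the comparison formula.} The inductive construction of \cite{CK0} builds $\ocM_{0,n+1}$ from $\mzn$ by a sequence of $S_n$-equivariant smooth blowups or flips. The centres are indexed by boundary strata of the form $\ocM_{0,h+1}\times\ocM_{0,n-h+1}$, and for $h=n/2$ the two factors can be interchanged. The blowup formula, applied equivariantly, then expresses $H^{2k}(\ocM_{0,n+1})$ as $H^{2k}(\mzn)\oplus H^{2k-2}(\mzn)$ plus correction terms. Each correction term is an induced product of cohomologies of $\ocM_{0,h+1}$ and $\ocM_{0,n-h+1}$, shifted by the codimension. Inverting this relation degree by degree gives $H^{2k}(\mzn)$ as an alternating sum
\[
\sum_{m\ge0}(-1)^m H^{2k-2m}(\ocM_{0,n+1})\ \text{ minus alternating sums of induced products.}
\]
The sign $(-1)^{k-i}$ in the statement comes from this telescoping of the factor $1+u^2$, in the same way as in the recursion \eqref{8}.

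\emph{Step 2: substitute the tree formula.} By \cite[Proposition 5.12]{CKL1}, $H^{2i}(\ocM_{0,n+1})=\sum_{T\in\sT_{n,i}/S_n}U_T$. Here the weighted rooted trees record the iterated Kapranov blowups: the leaves are the $n$ inputs, each vertex is a linear span being blown up, and the weights are exceptional-divisor powers. Substituting this into the alternating sum produces the first term $\sum_{i}(-1)^{k-i}U_T$.

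For the correction terms, I would apply the same formula to each factor. For $h\neq n-h$, the induced module $\mathrm{Ind}^{S_n}_{S_h\times S_{n-h}}(U_{T_1}\boxtimes U_{T_2})$ is exactly the induction product $U_{T_1}.U_{T_2}$. Summing over all ordered pairs $(h,n-h)$ counts each unordered splitting twice, which gives the factor $\tfrac12$ and the condition $T_1\neq T_2$. The degree bookkeeping $i+j\le k-1$ comes from the codimension shift.

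\emph{Step 3: the symmetric splitting $h=n/2$.} Here the stabilizer contains the wreath product $S_{n/2}\wr S_2$, and the diagonal pairs $T_1=T_2=T$ have to be handled separately. The swap acts on $H^*(\ocM_{0,n/2+1})^{\otimes2}$. Its antisymmetric part on the diagonal $U_T\otimes U_T$ is the plethysm $s_{(1,1)}\circ U_T$, while the symmetric part $s_{(2)}\circ U_T$ combines with the off-diagonal terms. Tracking the swap's eigenvalues on the normal bundle of the centre produces the sign $(-1)^k$ and the range $2i\le k-1$.

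I expect this step to be the main obstacle. Deciding exactly how the $S_2$-swap acts on the exceptional contributions, that is, whether one obtains $s_{(2)}$ or $s_{(1,1)}$ for each power of the normal class, needs a careful equivariant blowup formula. I would first check it against small cases, such as $n=4$ and $n=6$, using \eqref{8} to compare dimensions, and only then fix the signs in general.
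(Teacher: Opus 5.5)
Your overall strategy matches the paper's: combine the wall-crossing comparison formula \cite[Theorem 4.8]{CKL1} with the tree formula \cite[Proposition 5.12]{CKL1}. The telescoping of $1+u^2$ and the factor $\frac12$ from ordered splittings are also right. The gap is at the self-paired splittings $h=n/2$. You leave this step open, and the mechanism you propose gives the wrong answer.

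Suppose $\ocM_{0,n+1}$ were reached from an $S_n$-equivariant $\PP^1$-bundle over $\mzn$ by equivariant blowups along codimension-two centres. Then the new classes $j_*\pi^*\alpha$ would transform exactly like $\alpha\in H^*(Z)$, since a holomorphic action produces no sign from the normal bundle. The centre $\ocM_{0,n/2+1}\times\ocM_{0,n/2+1}$ would then contribute $u^2\,s_{(2)}\circ H^*(\ocM_{0,n/2+1})$, whose diagonal part is $\sum_T s_{(2)}\circ U_T$. These terms cannot ``combine with the off-diagonal terms'', which are the $U_{T_1}.U_{T_2}$ with $T_1\neq T_2$, and they do not occur in the statement.

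Already $n=4$ rules this out. $H^2(\ocM_{0,5})\cong\QQ^5$ restricts to $S_4$ as $2s_{(4)}+s_{(3,1)}$. Removing $H^2(\ocM_{0,4})\oplus H^0(\ocM_{0,4})=2s_{(4)}$ leaves $s_{(3,1)}=s_{(1,1)}\circ s_{(2)}$. It does not leave the permutation module $s_{(2)}\circ s_{(2)}=s_{(4)}+s_{(2,2)}$ on the splittings $12|34$, $13|24$, $14|23$.

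The missing idea is why the swap acts by the sign. For $n$ even, the $h=n/2$ classes enter at $\delta=0^+$, where $M^{0^+}$ is only a blowup of a $\PP^1$-bundle, and that $\PP^1$-bundle cannot be $S_n$-equivariant. Over $D_{A|A^c}$ with $|A|=n/2$, the fibre of $M^{0^+}\to\mzn$ has two rational components, according to which half carries the degree of $L$. These components are interchanged by the elements of $S_{n/2}\wr S_2$ exchanging $A$ and $A^c$; for $n=4$ they are $D_{12}$ and $D_{34}$, swapped by $(13)(24)$. The extra classes come from the difference of the two components, for instance $D_{12}-D_{34}$, so they are anti-invariant under the swap. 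Hence the $h=n/2$ contribution in the comparison formula is $u^2\,s_{(1,1)}\circ H^*(\ocM_{0,n/2+1})$.

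With that input, Step 3 is formal. The identity $s_{(1,1)}\circ\sum_T U_T=\sum_T s_{(1,1)}\circ U_T+\sum_{\{T_1,T_2\},\,T_1\neq T_2}U_{T_1}.U_{T_2}$ produces both the restriction $T_1\neq T_2$ in the middle sum and the last sum. The sign $(-1)^k$ and the range $2i\le k-1$ are just $(-1)^{k-i-j}$ and $i+j\le k-1$ at $i=j$, coming from the same telescoping. They have nothing to do with normal-bundle eigenvalues.
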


It is expected that $H^{2k}(\mzn)$ is a permutation representation (i.e. it has an $S_n$-invariant basis) for all $k$.
As a consequence of Theorem \ref{9}, the following were proved in \cite{CKL1}.
\begin{corollary}
(1) $H^{2i}(\mzn)$ is a permutation representation for $i\le 3$ or $i\ge n-6$.\\
(2) $H^{2i}(\mzn)\oplus H^{2i+2}(\mzn)$ is a permutation representation for any $i$.
\end{corollary}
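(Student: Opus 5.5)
Both statements are direct consequences of Theorem \ref{9}, read off degree by degree. Write $V_k:=\sum_{i\ge0,\,T\in\sT_{n,i}/S_n}(-1)^{k-i}U_T+\cdots$ for the right-hand side, which is a signed sum of induced trivial representations (and of $s_{(1,1)}\circ U_T$ terms). To show that a virtual representation is an actual permutation representation, I will group its terms by tree type so that each negative term is cancelled by a positive term of the same isomorphism type.

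\textbf{Step 1: regrouping the alternating sum.} Collect the terms of Theorem \ref{9} by the weight index: for fixed $T$ (or a fixed pair $(T_1,T_2)$), the coefficient is $(-1)^{k-i}$ for every $k$ with $k\ge i$ (respectively $k\ge i+j+1$). Adding degrees $2i$ and $2i+2$ then cancels every term that appears in both degrees, since consecutive coefficients have opposite signs. What survives are the terms whose weight threshold lies exactly at $k=i+1$, that is, the terms making their first appearance in degree $2i+2$. Each of these enters with sign $(-1)^0=+1$.

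\textbf{Step 2: identifying the survivors.} The survivors are positive sums of $U_T$, of $U_{T_1}.U_{T_2}$, and of $s_{(1,1)}\circ U_T$. The first two are induced trivial representations, hence permutation representations. This gives (2), provided the last term can be handled.

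\textbf{Step 3: the term $s_{(1,1)}\circ U_T$ (main obstacle).} The plethysm $s_{(1,1)}\circ U_T$ is an antisymmetric square, so it is not obviously a permutation representation. I would pair it with the $T_1=T_2$ diagonal contribution, which is absent from the middle sum. Concretely, $s_{(2)}\circ U_T+s_{(1,1)}\circ U_T=U_T.U_T$, and the middle sum with $T_1\neq T_2$ together with this diagonal should reassemble into the permutation representation on unordered pairs of trees, $s_{(2)}\circ U_T$, which is induced from $S_{n/2}\wr S_2$. The identity I would aim to prove is that the collected total for $h=n/2$, $T_1=T_2$, combined with the sign, becomes $s_{(2)}\circ U_T$ or $U_T.U_T$ minus $s_{(1,1)}\circ U_T$ in adjacent degrees. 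These differences are permutation representations (orbits of unordered pairs), and checking this bookkeeping is the delicate part.

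\textbf{Step 4: proving (1).} For $i\le3$, only trees of weight at most $3$ contribute. I would enumerate $\sT_{n,i}$ for small $i$ and check directly that the negative terms are dominated by matching positive orbits, for instance by exhibiting an injection of $S_n$-sets from the negative to the positive summands. For $i\ge n-6$, I would apply Poincaré duality, $H^{2i}\cong H^{2(n-3-i)}$, which is an $S_n$-isomorphism up to the orientation character; the orientation character is trivial here because $\mzn$ is a complex variety. This reduces the case $i\ge n-6$ to $n-3-i\le3$, which is the case already treated.
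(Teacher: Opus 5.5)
There is a genuine gap, and it sits in Step 1: you have the signs of the surviving terms wrong. In degree $2k$ each family carries the coefficient $(-1)^{k-\text{weight}}$ (with the tacit range $i\le k$ in the first sum), so common terms of degrees $2k$ and $2k+2$ do cancel. Only the first sum, however, has its threshold at $k=i$, where the sign is $+1$. A pair term with weights $(i,j)$ first appears at $k=i+j+1$, with sign $(-1)^{1}=-1$. An $s_{(1,1)}\circ U_T$ term first appears at $k=2i+1$, with sign $(-1)^{2i+1}=-1$. Doing your cancellation correctly gives
\[
H^{2k}\oplus H^{2k+2}=\sum_{T\in\sT_{n,k+1}/S_n}U_T\;-\;\frac12\sum_{2\le h\le n-2}\ \sum_{\substack{i+j=k\\ T_1\neq T_2}}U_{T_1}.U_{T_2}\;-\sum_{\substack{2i=k\\ T\in\sT_{\frac n2,i}/S_{\frac n2}}}s_{(1,1)}\circ U_T .
\]
This is $H^{2k+2}(\ocM_{0,n+1})$, with its Kapranov permutation basis, minus the wall-crossing blowup contributions. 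It is a genuine difference, not a positive sum. For $n=5$, $k=0$ it reads $(1+\mathrm{Ind}_{S_4}^{S_5}1+\mathrm{Ind}_{S_2\times S_3}^{S_5}1)-\mathrm{Ind}_{S_2\times S_3}^{S_5}1$. For $n=4$, $k=0$ it reads $(2\cdot 1+s_{(3,1)})-s_{(3,1)}$, and the subtracted $s_{(3,1)}=s_{(1,1)}\circ s_{(2)}$ is not even a permutation representation. So Step 2 is false as stated. Step 3 is aimed at a configuration that does not occur: the formula has no $U_T.U_T$ or $s_{(2)}\circ U_T$ term to pair with $s_{(1,1)}\circ U_T$. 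The identity $s_{(2)}\circ U_T+s_{(1,1)}\circ U_T=U_T.U_T$ is true but plays no role.

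What is missing is an absorption argument. You must show that the subtracted virtual module is swallowed by $\sum_{T\in\sT_{n,k+1}/S_n}U_T$ in a way that leaves an honest permutation module. Concretely, you need an $S_n$-equivariant matching of the pair data (forests on complementary leaf sets of total weight $k$) with a sub-$S_n$-set of weighted trees of weight $k+1$. The $s_{(1,1)}\circ U_T$ terms need separate treatment, since they are not linearizations of $S_n$-sets. In the $n=4$ example one writes $s_{(3,1)}=\mathrm{Ind}_{S_3}^{S_4}1-1$ and cancels it against the orbit $\{E_1,\dots,E_4\}$ in $H^2(\ocM_{0,5})=1+\mathrm{Ind}_{S_3}^{S_4}1$.

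The same issue affects Step 4. Your Poincar\'e duality reduction of $i\ge n-6$ to $n-3-i\le 3$ is fine. The case $i\le 3$, though, is only announced, and it is where the real content lies. Moreover, an ``injection of $S_n$-sets from negative to positive summands'' cannot literally handle the terms $\pm s_{(1,1)}\circ U_T$. When $n$ is even, these occur with alternating signs in every degree $2k\ge 2$ (already with $T$ the corolla in $\sT_{n/2,0}$).
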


Setting aside combinatorics, let us see how we obtained the new $S_n$-equivariant inductive construction of $\mzn$ in \cite{CK0}.
We first consider the moduli stack $\mathfrak{M}_{g,n}(\CC/\CC^*,1)$ of all maps $f:C\to \CC/\CC^*$ of degree 1 from a connected projective curve of genus $g$ with at worst nodal singularities and with $n$ smooth distinct marked points to the quotient stack of $\CC$ by the standard action of $\CC^*$.
By definition, $f$ is equivalent to a line bundle $L$ of degree 1 on $C$ with a section $s$.

Next we pick a stability condition that depends on a positive rational number $\delta$, which cuts out an open substack $M^\delta$.
For this, we let $\nu:C\to \bar{C}$ denote the stabilization map so that $\omega_{\bar C}^\log$ is ample. Let $\bar L=\nu_*L$ and $\bar s=\nu_*s$. We say a map $f:C\to \CC/\CC^*$ is $\delta$-stable if for any subsheaf $0\ne \bar L'\subsetneq \bar L$,
$$\frac{\chi(\bar L')+\delta \theta(\bar L',\bar s)}{r(\bar L')}<\frac{\chi(\bar L)+\delta}{r(\bar L)}$$
where $\theta(\bar L',\bar s)=1$ if $\bar s$ factors through $\bar L'$ and $0$ otherwise. Here, $r(\bar L)$ and $r(\bar L')$ denote the leading coefficients of the Hilbert polynomials of $\bar L$ and $\bar L'$ respectively, with respect to the ample line bundle $\omega_{\bar C}^{\mathrm{log}}$.

It was proved in \cite[Theorem 5.10]{CK0} that the moduli stack $M^\delta$ of $\delta$-stable maps admits a projective moduli space for all $\delta$ except for finitely many values called the walls (depending on $g,n$). Moreover, when $g=0$, we proved in \cite[\S7]{CK0} (see also \cite[Theorem 2.8]{CKL1}) that
\begin{enumerate}
    \item  $M^\infty=\ocM_{0.n+1}$;
    \item $M^{0^+}$ is a $\PP^1$-bundle over $\mzn$ for $n$ odd and a blowup of a $\PP^1$-bundle  over $\mzn$ for $n$ even;
    \item the wall crossings are smooth blowups.
\end{enumerate}
As the $\delta$-stability does not distinguish the marked points, the wall crossing blowups are all $S_n$-equivariant and we thus obtain a formula (cf. \cite[Theorem 4.8]{CKL1}) that compares the $S_n$-representations $H^*(\mzn)$ and $H^*(\ocM_{0,n+1})$.

\medskip

In principle, Theorem \ref{9} should answer all the questions about the $S_n$-module $H^*(\mzn)$. However, since the combinatorics of weighted rooted trees is formidable, the closed formula in Theorem \ref{9} is not very effective for actual computation. In \cite{CKL2},
by exploiting the recursive structure of weighted rooted trees,
the authors found effective inductive formulae for the graded $S_n$-modules $H^*(\mzn)$ and $H^*(\ocM_{0,n+1})$.
This then enabled them to find formulae for the Betti numbers of the moduli space $\mzn/S_n$ of stable curves of genus $0$ with $n$ \emph{unordered} marked points and those of $\ocM_{0,n+1}/S_n$. Remarkably, the Betti numbers exhibit polynomial growth as $n$ increases.

Even with the inductive formulae above, the computation is often  difficult due to \emph{plethysm} operations.
In \cite{CKL3}, the authors introduced a map $$\Lambda_n\lra \QQ[q]_{\le n}=\{f\in \QQ[q]\,|\, \deg f\le n\}$$ as the unique homomorphism sending the plethysm to composition, where $\Lambda_n$ denotes the space of symmetric functions of degree $n$. Indeed, a symmetric function $F$ in $x_1,x_2,\dots$ defines a map $$\cS_F: \mathbb{N} \lra \QQ, \quad m \mapsto F(\underbrace{1, \dots, 1}_{m \text{ times}}, 0, 0, \dots).$$
It is easy to see that this is a polynomial function, denoted by $\cS_F(m)$ and called the \emph{characteristic polynomial} of $F$.
It turns out that $\cS_F$ retains key information about $F$ and is much easier to compute.
In \cite{CKL3}, the authors computed the characteristic polynomials of the $S_n$-module $H^*(X)$ and checked their log-concavity for many smooth projective varieties $X$ acted on by $S_n$. They further proved inductive formulae for the characteristic polynomials of the $S_n$-modules $H^*(\mzn)$ and $H^*(\ocM_{0,n+1})$.

\bigskip
\section{Properties of the Betti number sequence of $\mzn$}\label{S3}

In \S\ref{S2}, we have seen that satisfactory answers to the first two items in Question \ref{0} were found for $\mzn$. In this section, we consider the last item in Question \ref{0} for $\mzn$. More specifically, we are interested in  log-concavity properties.

\begin{definition}
A sequence $a_0,a_1,\cdots,a_m\in \RR_{>0}$ of positive real numbers is called $r$-\emph{ultra-log-concave} ($r$-ULC for short) if for any $0<i<m$,
\beq\label{11}
\frac{a_i^2}{\binom{m}{i}^{2r}}\ge \frac{a_{i-1}}{\binom{m}{i-1}^r} \frac{a_{i+1}}{\binom{m}{i+1}^r}.
\eeq
\end{definition}
It is obvious that $(r+1)$-ULC implies $r$-ULC for $r\ge 0$ and $0$-ULC is the usual \emph{log-concavity} while $1$-ULC is the usual ultra-log-concavity.

Based on computations using \eqref{8}, Aluffi-Chen-Marcolli proposed the following.
\begin{conjecture}\label{12} \cite{ACM}
(1) The even degree Betti numbers $\{b_{n,2i}\}_{0\le i\le n-3}$ of $\mzn$ form an ultra-log-concave sequence. \\
(2) The even degree Poincar\'e polynomial $P_n(t)=\sum_i b_{n,2i}t^i$ has only real negative roots.
\end{conjecture}
Note that (1) is an immediate consequence of (2) by Newton's inequalities.

\begin{theorem}\label{14} \cite{BK}
    Conjecture \ref{12} holds for all $n$.
\end{theorem}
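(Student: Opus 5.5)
We prove part (2), that $P_n(t)$ is real-rooted with only negative roots; part (1) then follows at once from Newton's inequalities, as remarked after Conjecture \ref{12}. The natural tool is the recursion \eqref{8} in the variable $t=u^2$:
\[
P_{n+1}(t)=(1+t)P_n(t)+t\sum_{j=3}^{n-1}\binom{n-1}{j-1}P_j(t)P_{n+2-j}(t).
\]
Products of real-rooted polynomials are real-rooted, but sums of them generally are not. So the plan is to strengthen the induction hypothesis to an \emph{interlacing} statement, in the spirit of the theory of compatible families (Chudnovsky--Seymour) and stable polynomials.

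\textbf{Step 1 (reformulation).} We pass from \eqref{8} to an identity for the generating function $\varphi$ from \eqref{6}. Differentiating \eqref{6} in $z$ gives
\[
\partial_z\varphi\,\bigl(t^2-t(1+\varphi)^{t-1}\bigr)=t(t-1),
\]
which is a first-order ODE for $\varphi$. We would try to extract from it a recursion of the shape
\[
P_{n+1}=a_n(t)P_n+b_n(t)\,t\,P_n' \quad \text{plus lower corrections},
\]
or, alternatively, a representation of $P_n$ as a positive combination of products that is closed under the operations preserving real-rootedness. These operations are multiplication by $(1+t)$ and the maps $f\mapsto \alpha f+\beta tf'$ with $\alpha,\beta\ge 0$.

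\textbf{Step 2 (induction with interlacing).} We would prove by induction on $n$ that all $P_j$ have only real, negative, simple roots, and that suitably normalized consecutive polynomials interlace, for example that $P_{j}$ interlaces $P_{j+1}$. Granting this, each summand $tP_jP_{n+2-j}$ in \eqref{8} has roots interlacing those of $(1+t)P_n$; a convenient way to arrange this is to show that all of these summands have a common interlacer. The Chudnovsky--Seymour lemma then says that a nonnegative combination of polynomials with a common interlacer is real-rooted, so $P_{n+1}$ is real-rooted. It remains to check that the interlacing propagates to the next step, i.e.\ that $P_n$ interlaces $P_{n+1}$. Negativity of the roots is automatic once real-rootedness holds, since all coefficients are positive.

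\textbf{Main obstacle.} The hard step is verifying that the products $P_jP_{n+2-j}$ for different $j$ share a common interlacer; the degrees vary with $j$ ($\deg P_j=j-3$), so this does not follow from pairwise interlacing alone. If the direct approach stalls, we would first try to rewrite the $P_n$ through a multivariate stable polynomial: assign a variable to each boundary stratum, or use Keel's blowup description, and prove stability via the Borcea--Br\"and\'en theory of stability-preserving linear operators. Real-rootedness of $P_n$ would then follow by diagonal specialization.
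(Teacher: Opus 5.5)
Your proposal is a strategy rather than a proof, and the step that carries all the content is assumed, not proved. In Step 2 you write ``granting this'' and then need two things: (a) that $(1+t)P_n$ and all the products $tP_jP_{n+2-j}$, $3\le j\le n-1$, share a common interlacer, and (b) that $P_n$ interlaces $P_{n+1}$. Neither is proved. Neither follows from the hypothesis that consecutive $P_j$ interlace. That hypothesis does not control where the root set of $P_jP_{n+2-j}$ sits relative to the roots of $P_{j'}P_{n+2-j'}$ or of $(1+t)P_n$; note that the former root set is a union of the roots of two different $P$'s.

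The requirement is also rigid. For $n$ even, $P_n$ is palindromic of odd degree $n-3$, so $P_n(-1)=0$ and $(1+t)P_n$ has a double root at $-1$ (e.g.\ $P_6=(1+t)(1+15t+t^2)$). This forces every common interlacer to vanish at $-1$, and that constraint must hold against all the products simultaneously. You flag this obstacle yourself but give no way around it.

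Step 1 does not reduce the problem either. The ODE you derive is in $z$ only. Comparing coefficients in $z$ just reproduces a convolution recursion like \eqref{8}, not a first-order operator $f\mapsto \alpha f+\beta tf'$ in $t$. Your ``lower corrections'' are precisely the quadratic terms that block any Rolle-type argument. The Borcea--Br\"and\'en fallback names no candidate stable polynomial or operator, so it cannot be assessed.

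The missing idea, which the paper's proof supplies, is to remove the convolution by adding a second variable, not by controlling interlacing in $t$.

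\begin{enumerate}
\item The polynomials $F_m(y,t)$ of \eqref{13} satisfy a \emph{linear} first-order recursion involving only $F_m$ and $\partial_yF_m$. By \eqref{18} they specialize to $F_m(1,t)=P_{m+1}(t)$.
\item For fixed $t<0$, identity \eqref{19} writes $F_{m+1}$ as a nonvanishing multiple of $\frac{d}{dy}(w_mF_m)$. Here $w_mF_m$ vanishes at $y=0$ and at $y=1-t$.
\item Rolle's theorem and induction on $m$ then show that $F_m$ has $m-2$ real simple roots in $y$. They lie in $(0,1)$ for $t$ near $0^-$ and in $(1,\infty)$ for $t\ll 0$.
\item The curve $F_m(y,t)=0$ is smooth for $t<0$ by the implicit function theorem. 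Its branches therefore meet the line $y=1$ transversely $m-2$ times.
\item These crossings are $m-2$ negative simple roots of $P_{m+1}(t)$, which has degree $m-2$.
\end{enumerate}

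Real-rootedness in $t$ thus comes from a deformation-and-crossing argument in the auxiliary variable $y$, and no interlacing among the $P_j$ is ever needed.
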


A key ingredient in the proof is a bivariate deformation of the Poincar\'e polynomial $P_n(t)$ as follows. Let $F_m(y,t) \in \QQ[y,t]$ be defined inductively by
\beq\label{13} F_{m+1}(y,t)=(my-m+1)F_m(y,t)+y(y+t-1)\partial_yF_m(y,t),\ \  F_1(y,t)=1.\eeq
Let us consider the exponential sum
\beq\label{16} \Phi(x,y,t)=\sum_{m\ge 1} \frac{x^m}{m!}F_m(y,t)\in \QQ[y,t][\![x]\!]\eeq
of all $F_m(y,t)$. Then it is easy to see that \eqref{13} is equivalent to
\beq\label{15}
(1-(y-1)x)\partial_x\Phi-y(y+t-1)\partial_y\Phi=1+\Phi,\quad \Phi(0,y,t)=0.\eeq
We can solve the partial differential equation \eqref{15} by considering the characteristic equations
\beq\label{17}
\frac{dx}{ds}=1-(y-1)x, \ \ \frac{dy}{ds}=-y(y+t-1), \ \ \frac{dt}{ds}=0, \ \  \frac{d\Phi}{ds} = 1+\Phi.
\eeq
We thus obtain
\beq\label{18}
F_m(1,t)=P_{m+1}(t),\quad \partial_yF_m(1,t)=t^{-1}\left( P_{m+2}(t)-P_{m+1}(t)  \right).\eeq
By \eqref{13}, we have
\beq\label{19}
\frac{d}{dy} \left( w_m(y)F_m(y,t)\right)=-\frac{w_m(y)}{y(1-t-y)} F_{m+1}(y,t)\eeq
for each fixed $t<0$ where $$w_m(y):=y^{\frac{m-1}{1-t}}(1-t-y)^{\frac{1-mt}{1-t}}.$$
By induction on $m$ and Rolle's theorem, for fixed $t=0^-$, $F_m(y,t)$ has $(m-2)$ real simple roots in the interval $(0,1)$. Likewise, for fixed $t<\!<0$, $F_m(y,t)$ has $m-2$ real simple roots in the interval $(1,\infty)$. Since the curve $F_m(y,t)=0$ is smooth by the implicit function theorem for $t<0$, we find that it meets the line $y=1$ transversely $m-2$ times. Therefore $P_{m+1}(t)=F_m(1,t)$ has $m-2$ negative real simple roots. The completes a proof of Theorem \ref{14}. See \cite{BK} for details.

\begin{center}
\begin{tikzpicture}[>=latex,thick,scale=0.8]

\tikzset{
    asterisk/.pic={
        \draw[thick] (-0.15,0) -- (0.15,0);
        \draw[thick] (-0.075,-0.13) -- (0.075,0.13);
        \draw[thick] (-0.075,0.13) -- (0.075,-0.13);
    }
}

\draw[->, thick] (-4.5, 0) -- (1.5, 0) node[right] {$t$};
\draw[->, thick] (0, -0.5) -- (0, 4.2) node[right] {$y$};

\draw[thick] (-4.5, 1.5) -- (1.5, 1.5);
\node[above right] at (1, 1.5) {$y=1$};

\draw[thick] (-3.8, 3.3) to[out=0, in=135] (-0.8, 1.5) to[out=-45, in=160] (0, 1.2);
\draw[thick] (-3.8, 2.6) to[out=0, in=135] (-1.8, 1.5) to[out=-45, in=160] (0, 0.8);
\draw[thick] (-3.8, 1.9) to[out=0, in=135] (-2.8, 1.5) to[out=-45, in=160] (0, 0.4);

\pic at (-3.8, 3.3) {asterisk};
\pic at (-3.8, 2.6) {asterisk};
\pic at (-3.8, 1.9) {asterisk};

\pic at (0, 1.2) {asterisk};
\pic at (0, 0.8) {asterisk};
\pic at (0, 0.4) {asterisk};

\filldraw[fill=white, thick] (-0.8, 1.5) circle (3.5pt);
\filldraw[fill=white, thick] (-1.8, 1.5) circle (3.5pt);
\filldraw[fill=white, thick] (-2.8, 1.5) circle (3.5pt);

\end{tikzpicture}
\end{center}
\medskip

\begin{remark} The above proof of Theorem \ref{14} was found with an assistance by an AI research environment called Co-Mathematican by Google DeepMind. The bivariate deformation $F_m(y,t)$ of $P_{m+1}(t)$ in terms of the inductive formula \eqref{13} was introduced by AI, seemingly out of nowhere. Its geometric meaning remained a mystery until \cite{BKs} settled this issue by showing that $F_m(y,t)$ is in fact the \emph{stratified virtual Pioncar\'e polynomial} of $\overline{\mathcal{M}}_{0,m+1}$ multiplied by $y$. See \cite[Corollary 1.6]{BKs}. 
\end{remark}

\medskip
Actually, numerical experiments with \eqref{8} suggest that the even degree Betti sequence $\{b_{n,2i}\}$ of $\mzn$ is not just 1-ULC but also 2-ULC. In fact, $3$-ULC seems to hold as well except for a few exceptions. However, 4-ULC certainly fails. How can we explain these observations?

Let $\chi_n=\sum_i b_{n,2i}$ be the Euler characteristic of $\mzn$. Let
\beq\label{20}
p_{n,i}=\frac{b_{n,2i}}{\chi_n},\quad p_n(t)=\sum_ip_{n,i}t^i.
\eeq
Then we can think of $\{p_{n,i}\}_{0\le i\le n-3}$ as the probability distribution of a random variable with values in $\ZZ_{\ge 0}$.

Let us recall a few basic facts in probability theory.
Given a polynomial $B(t)\in \RR_{\ge 0}[t]$ with $B(1)=1$, its coefficients define a probability distribution of a random variable in $\ZZ_{\ge 0}$ whose mean and variance are given by
\beq\label{21}
\mu=B'(1),\quad \sigma^2=B''(1)+B'(1)-B'(1)^2.\eeq
Given such a polynomial $B(t)$ and $n\ge 1$, obviously $B(t)^n$ also defines a probability distribution with the mean $\mu_n=n\mu$ and the variance $\sigma_n^2=n\sigma^2$.
Now the central limit theorem (in any textbook on probability theory) tells us the following.
\begin{theorem}
    For any $B(t)\in \RR_{\ge 0}[t]$ with $B(1)=1$ and $\sigma^2=B''(1)+B'(1)-B'(1)^2>0$, the sequence of random variables $\xi_n$ defined by the coefficients of $B(t)^n$ is asymptotically normal with mean $n\mu$ and variance $n\sigma^2$, where $\mu$ and $\sigma$ are defined by \eqref{21}. More precisely, the normalized random variable $\frac{\xi_n-n\mu}{\sqrt{n}\sigma}$ converges in distribution to the normal distribution $\cN(0,1)$.
\end{theorem}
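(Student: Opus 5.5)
The plan is to reduce the statement to the classical Lindeberg--L\'evy central limit theorem for i.i.d. bounded integer-valued random variables. The polynomial $B(t)$ has nonnegative coefficients summing to $B(1)=1$, so it is the probability generating function of a random variable $X$ with finitely many values in $\ZZ_{\ge0}$. Take independent copies $X_1,\dots,X_n$ of $X$ and set $\xi_n=X_1+\cdots+X_n$. By independence,
\[
\EE[t^{\xi_n}]=\prod_{j=1}^n\EE[t^{X_j}]=B(t)^n .
\]
So the law of $\xi_n$ is exactly the distribution given by the coefficients of $B(t)^n$. This identification is the only structural input.

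Next I would verify the moment formulas \eqref{21}. Write $B(t)=\sum_k p_kt^k$. Then $B'(1)=\sum_k kp_k=\EE X$, and $B''(1)=\sum_k k(k-1)p_k=\EE[X^2]-\EE X$. Hence
\[
\mathrm{Var}(X)=B''(1)+B'(1)-B'(1)^2=\sigma^2 .
\]
Since $X$ is bounded, all of its moments are finite. By linearity of expectation and independence, $\EE\xi_n=n\mu$ and $\mathrm{Var}(\xi_n)=n\sigma^2$. Alternatively, one can differentiate $B(t)^n$ at $t=1$ directly.

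Finally I would apply the CLT, giving the characteristic-function argument for completeness. Set $\varphi(s)=\EE e^{is(X-\mu)}=e^{-is\mu}B(e^{is})$. Since $X$ is bounded, $\varphi$ is smooth with $\varphi(0)=1$, $\varphi'(0)=0$ and $\varphi''(0)=-\sigma^2$. Taylor's theorem gives $\varphi(s)=1-\tfrac{\sigma^2s^2}{2}+o(s^2)$. The characteristic function of $\frac{\xi_n-n\mu}{\sqrt n\sigma}$ is $\varphi\bigl(s/(\sqrt n\sigma)\bigr)^n$. By the Taylor expansion,
\[
\varphi\bigl(s/(\sqrt n\sigma)\bigr)^n=\Bigl(1-\tfrac{s^2}{2n}+o(1/n)\Bigr)^n\to e^{-s^2/2}
\]
for each fixed $s$. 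This limit is the characteristic function of $\cN(0,1)$. L\'evy's continuity theorem then gives convergence in distribution. The hypothesis $\sigma^2>0$ is used exactly here, to make the normalization well defined.

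There is no real obstacle in this argument. The one point needing care is the $o(1/n)$ step: one takes a branch of the logarithm near $1$, which is legitimate because $\varphi(s/\sqrt n\sigma)\to1$ uniformly for $s$ in compact sets. The result is standard, and in the paper it is enough to cite it. The substantive later work will be to show that the Betti distributions of $\mzn$ and $C[n]$, which are not literally $n$-th powers, behave asymptotically like $B(t)^n$. That will presumably be done via a singularity analysis of the generating functions such as \eqref{6}.
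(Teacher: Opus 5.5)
Your proposal is correct and takes essentially the paper's approach. The paper gives no separate proof and simply invokes the textbook central limit theorem; your identification of $\xi_n$ as a sum of $n$ i.i.d.\ copies of a variable with probability generating function $B(t)$, followed by the standard characteristic-function argument, is exactly the content of that citation.
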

Here, a sequence of random variables $\xi_n$ is said to \emph{converge in distribution} to a continuous random variable $\xi$ if
$$\lim_{n\to \infty} \PP(\xi_n\le x) = \PP(\xi\le x),\quad \forall x\in \RR,$$
where $\PP$ denotes the probability measure. We say that a sequence of random variables $\xi_n$ with the mean $\mu_n$ and the variance $\sigma_n^2$ is \emph{asymptotically normal} if the normalized random variable $\frac{\xi_n-\mu_n}{\sigma_n}$ converges in distribution to the normal distribution $\cN(0,1)$.

There are various generalizations of this central limit theorem. A particularly useful generalization for us is the following.
\begin{theorem} \label{22} \cite[Theorem IX.8]{FS}
Let $\xi_n$ be random variables defined by $p_n(t)\in \RR_{\ge 0}[t]$, $p_n(1)=1$. Suppose that there are analytic functions $A(t)$, $B(t)$ with $A(1)=B(1)=1$ such that
\beq\label{23} p_n(t)=A(t)B(t)^n\left(1+O(n^{-1})\right)\eeq
uniformly in a fixed neighborhood of $t=1$. Assume furthermore that the variability condition
\[ B''(1)-B'(1) +B'(1)^2 \ne 0\]
holds.
Then, with
$$\mu_n=n\mu_B+\mu_A+O(n^{-1}),\quad \sigma_n^2=n\sigma_B^2+\sigma_A^2+O(n^{-1}),$$
where $\mu_B, \sigma_B^2$ are defined by \eqref{21} and $\mu_A, \sigma_A^2$ are defined likewise, the random variable $\frac{\xi_n - \mu_n}{\sigma_n}$ converges in distribution to the standard normal distribution $\cN(0,1)$.
\end{theorem}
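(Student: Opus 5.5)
The plan is to pass to the logarithm of the moment generating function and apply Lévy's continuity theorem. The hypothesis \eqref{23} is the statement that $\log p_n$ is, up to an $O(n^{-1})$ error, an affine function of $n$. The key steps are as follows.

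\textbf{Step 1: logarithmic expansion.} Write $t=e^{s}$ with $s$ in a small complex disc $|s|\le r$, chosen so that $e^s$ stays in the neighborhood of $t=1$ where \eqref{23} holds. I interpret that neighborhood as complex, which the analyticity hypothesis suggests. Shrinking $r$ if necessary, $A(e^s)$ and $B(e^s)$ are nonvanishing, so we may define the analytic functions
\[
a(s)=\log A(e^s),\qquad b(s)=\log B(e^s),\qquad a(0)=b(0)=0.
\]
Put $\varepsilon_n(s)=p_n(e^s)/(A(e^s)B(e^s)^n)-1$. This is a ratio of analytic functions, so it is analytic on the disc, and by \eqref{23} it satisfies $\sup_{|s|\le r}|\varepsilon_n(s)|=O(n^{-1})$. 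Hence
\[
\log p_n(e^s)=a(s)+n\,b(s)+\eta_n(s),
\]
where $\eta_n=\log(1+\varepsilon_n)$ is analytic and uniformly $O(n^{-1})$ on $|s|\le r$.

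\textbf{Step 2: mean and variance.} By Cauchy's estimates on the slightly smaller disc $|s|\le r/2$, the derivatives $\eta_n'(0)$ and $\eta_n''(0)$ are also $O(n^{-1})$. The first two cumulants of $\xi_n$ are the first two derivatives at $s=0$ of $\log p_n(e^s)$. Moreover $b'(0)=\mu_B$ and $b''(0)=\sigma_B^2$, which is just \eqref{21} rewritten in the variable $s$, and similarly for $a$. This gives the stated formulas
\[
\mu_n=n\mu_B+\mu_A+O(n^{-1}),\qquad \sigma_n^2=n\sigma_B^2+\sigma_A^2+O(n^{-1}).
\]
The variability condition, read as $\sigma_B^2=B''(1)+B'(1)-B'(1)^2\neq 0$, then guarantees $\sigma_n\sim\sigma_B\sqrt n\to\infty$.

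\textbf{Step 3: characteristic function.} Fix a real $\tau$ and set $s=i\tau/\sigma_n$, which lies in the disc for $n$ large. Taylor-expand $a$ and $b$ to third order, with remainders bounded uniformly on the disc. This yields
\[
\log \EE\bigl[e^{i\tau(\xi_n-\mu_n)/\sigma_n}\bigr]=-\frac{i\tau\mu_n}{\sigma_n}+a(s)+n\,b(s)+\eta_n(s)=-\frac{\tau^2}{2}+O\bigl(n\,|s|^3\bigr)+O(n^{-1}).
\]
Here the linear terms cancel by Step 2, the quadratic term is $-\tfrac{\tau^2}{2}\cdot\sigma_n^2/\sigma_n^2$ up to $O(n^{-1})$ corrections, and $n|s|^3=O(n^{-1/2})$. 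So the characteristic function converges pointwise to $e^{-\tau^2/2}$, and Lévy's continuity theorem gives convergence in distribution to $\cN(0,1)$. Alternatively, one can take $s$ real and small and argue through moment generating functions in the same way.

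\textbf{Main obstacle.} The only delicate point is justifying that the $O(n^{-1})$ error survives differentiation. This is why uniformity of \eqref{23} on a genuine complex neighborhood is needed: it allows Cauchy's estimates to control $\eta_n'$, $\eta_n''$ and the third-order Taylor remainders uniformly in $n$. If only a real neighborhood were available, I would instead work with the real moment generating function and use Curtiss' theorem for convergence. In that case, however, the asymptotics for $\mu_n$ and $\sigma_n^2$ would require an extra argument, for instance convexity of $\log p_n(e^s)$ in real $s$.
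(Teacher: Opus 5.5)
Your proposal is correct. It is essentially the standard proof of the quasi-powers theorem in Flajolet--Sedgewick, which the paper only cites: uniformity of the analytic $O(n^{-1})$ error on a complex neighborhood lets Cauchy's estimates carry it to the first two cumulants, and then the characteristic function of the standardized variable tends to $e^{-\tau^2/2}$, so a continuity theorem applies (the cited source also uses Berry--Esseen to obtain the rate $O(n^{-1/2})$).

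You were also right to read the variability condition as $\sigma_B^2=B''(1)+B'(1)-B'(1)^2\neq 0$, as in \eqref{21} and the cited source. The sign pattern printed in the statement is a slip, and with it the theorem would fail, e.g.\ for $p_n(t)=t^{2n}$.
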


Let us apply Theorem \ref{22} to $p_n(t)$ defined by \eqref{20}.
By Getzler's identity \eqref{6} with $t=u^2$, one can prove \eqref{23} with
\beq\label{24}
B(t)=\frac{e-2}{t^{\frac{1}{t-1}}-1-t^{-1}}, \ \ A(t)=t^{-\frac{t-2}{2(t-1)}}\left(\frac{t^{\frac{1}{t-1}}-1-t^{-1}}{e(e-2)} \right)^{\frac12}.
\eeq
See \cite[\S3]{CK} for details. By Theorem \ref{22}, we thus obtain the following.
\begin{theorem}\label{25} \cite[Theorem 3.1]{CK}
    The even degree Betti numbers of $\mzn$ are asymptotically normally distributed with the mean and the variance
    $$\mu_n=\frac{n-3}{2}, \quad \sigma_n^2=\frac{3-e}{6(e-2)} n+\frac{4e-11}{12e-24}+O(n^{-1}).$$
\end{theorem}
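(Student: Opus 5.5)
The plan is to apply Theorem \ref{22} to $p_n(t)=P_n(t)/\chi_n$, where $P_n(t)=\sum_i b_{n,2i}t^i$. The first step is to establish the quasi-power form \eqref{23} with $A,B$ as in \eqref{24}. Put $t=u^2$ in Getzler's identity \eqref{6}. Then $\varphi=\varphi(z,t)$ is defined implicitly by
$$G(\varphi,z,t):=t^2\varphi-(1+\varphi)^t+1-t(t-1)z=0 .$$
For $t$ near $1$, the generating function $\varphi(z,t)=z+\sum_{n\ge2}P_{n+1}(t)z^n/n!$ has a dominant singularity at the point $\rho(t)$ where $\partial_\varphi G=0$. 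This condition is $t^2=t(1+\varphi)^{t-1}$, so
$$1+\varphi(\rho(t),t)=t^{\frac{1}{t-1}} .$$
Substituting back into $G=0$ gives
$$\rho(t)=\frac{t^2(t^{1/(t-1)}-1)-t\cdot t^{1/(t-1)}+t}{t(t-1)} .$$
One checks that this simplifies, up to normalization, so that $\rho(t)^{-1}$ is proportional to $(t^{1/(t-1)}-1-t^{-1})^{-1}$. At $t=1$ we get $1+\varphi=e$ and $\rho(1)=e-2$, which matches \eqref{24}.

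Near $\rho(t)$ the singularity is of square-root type, since $\partial_\varphi^2G=-t(t-1)(1+\varphi)^{t-2}\neq0$ for $t\ne1$, and it extends analytically through $t=1$ after dividing by $t-1$. The transfer theorems of singularity analysis (Flajolet--Sedgewick) then give, uniformly for $t$ in a complex neighborhood of $1$,
$$\frac{P_{n+1}(t)}{n!}=C(t)\,n^{-3/2}\rho(t)^{-n}\bigl(1+O(n^{-1})\bigr),$$
with $C$ analytic and nonzero. Dividing by the value at $t=1$ cancels $n^{-3/2}$ and yields \eqref{23} with $B(t)=\rho(1)/\rho(t)$ and $A(t)=C(t)/C(1)$. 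The shift from $P_{n+1}$ to $P_n$ only changes $A$ by a factor $B^{-1}$.

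Next, compute $\mu_B,\sigma_B^2$ and $\mu_A,\sigma_A^2$ from \eqref{21} by Taylor expanding at $t=1$. Write $t=1+\epsilon$, so that
$$t^{1/(t-1)}=\exp\bigl(\log(1+\epsilon)/\epsilon\bigr)=e\bigl(1-\epsilon/2+11\epsilon^2/24-\dots\bigr).$$
Expanding $B$ to second order gives $\mu_B=1/2$, consistent with Poincaré duality forcing the mean $(n-3)/2$. The second-order term gives $\sigma_B^2=(3-e)/(6(e-2))$, and the analogous expansion of $A$ gives the constant term. The mean $(n-3)/2$ can alternatively be read off exactly from the palindromic symmetry of $P_n$. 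The variability condition holds because $\sigma_B^2>0$, since $2<e<3$. Theorem \ref{22} then gives the claimed asymptotic normality.

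The main obstacle is verifying the hypotheses of singularity analysis uniformly in $t$. One must show that $\rho(t)$ is the unique dominant singularity on $|z|=|\rho(t)|$ for complex $t$ near $1$. One must also show that $\varphi$ extends to a $\Delta$-domain there, and handle the removable degeneracy at $t=1$, where $G$ becomes $\varphi-(1+\varphi)\log(1+\varphi)+\dots$ after dividing by $t-1$. I would first treat $t=1$ directly: the limiting equation is $z=(1+\varphi)\log(1+\varphi)-2\varphi$, which has positive coefficients, giving aperiodicity and uniqueness. I would then invoke the analytic implicit function theorem with perturbation in $t$.
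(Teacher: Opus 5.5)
Your proposal follows the paper's route, which itself defers the details to its reference CK. You set $t=u^2$ in \eqref{6}, locate the square-root singularity where $\partial_\varphi z=0$, transfer uniformly for $t$ near $1$, and apply Theorem \ref{22}. Your $B=\rho(1)/\rho(t)$ and $A=C(t)/C(1)$ are exactly \eqref{24}, which is normalized so that $p_{n+1}(t)\approx A(t)B(t)^n$. Your shift to $p_n$ is what produces the stated constants: $\mu_A=-1$, $\sigma_A^2=\frac{2e-5}{12(e-2)}$, $\mu_B=\frac12$, $\sigma_B^2=\frac{3-e}{6(e-2)}$ give $\mu_A-\mu_B=-\frac32$ and $\sigma_A^2-\sigma_B^2=\frac{4e-11}{12(e-2)}$.

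There are some slips to fix:
\begin{enumerate}
\item In your numerator for $\rho(t)$ the last term should be $+1$, not $+t$. As written it simplifies to $t^{1/(t-1)}-1$, giving $\rho(1)=e-1$. With $+1$ it gives $\rho(t)=t^{1/(t-1)}-1-t^{-1}$ exactly, not merely up to normalization.
\item The $t=1$ equation is $z=2\varphi-(1+\varphi)\log(1+\varphi)$, and its coefficients in $\varphi$ alternate in sign. It is $\varphi(z,1)$ that has positive coefficients. Uniqueness of the singularity on $|z|=e-2$ then follows because a singular point would need $\log(1+\varphi)=1$, i.e.\ $\varphi=e-1$, whereas $|\varphi(z,1)|<\varphi(e-2,1)=e-1$ for $z\neq e-2$ on that circle.
\item To obtain $A$ and the constant term you need $C(t)=\frac{1}{\sqrt{2\pi}}\,t^{-\frac{t-2}{2(t-1)}}\rho(t)^{1/2}$. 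This comes from $\partial_\varphi^2 z=-t^{\frac{t-2}{t-1}}$ at the singular point.
\end{enumerate}
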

In fact, a stronger convergence (local limit law) was proved in \cite{CK} and we have the following corollary.
\begin{corollary}\label{cor:ULC}
    The even degree Betti number sequence $\{b_{n,2i}\}$ of $\mzn$ is asymptotically 3-ULC but not 4-ULC in the central range $|i-\frac{n-3}{2}|=O(\sqrt{n})$.
\end{corollary}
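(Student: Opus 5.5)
The plan is to deduce the statement from a sufficiently precise local limit law for $p_{n,i}=b_{n,2i}/\chi_n$. We then compare the log-concavity ratio
$$R_{n,i}:=\frac{b_{n,2i}^2}{b_{n,2i-2}\,b_{n,2i+2}}$$
with the corresponding binomial ratio. Write $m=n-3$. Condition \eqref{11} is equivalent to
$$R_{n,i}\ \ge\ \left(\frac{\binom{m}{i}^2}{\binom{m}{i-1}\binom{m}{i+1}}\right)^{r}=\left(\frac{(i+1)(m-i+1)}{i(m-i)}\right)^{r}.$$
In the central range $i=\frac{m}{2}+O(\sqrt n)$ the right-hand side is $\left(1+\frac{4}{n}+O(n^{-3/2})\right)^r=1+\frac{4r}{n}+O(n^{-3/2})$. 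So everything reduces to showing
$$\log R_{n,i}=\frac{1}{\sigma_n^2}+o(n^{-1})=\frac{6(e-2)}{(3-e)n}+o(n^{-1})$$
uniformly for $|i-\frac{n-3}{2}|=O(\sqrt n)$, where $\sigma_n^2$ is the variance in Theorem \ref{25}.

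Granting this, the conclusion is numerical. The constant $\frac{6(e-2)}{3-e}\approx 15.30$ lies strictly between $4\cdot 3=12$ and $4\cdot 4=16$. Hence for $n$ large we get $\log R_{n,i}-4r/n\ge (15.30-12-o(1))/n>0$ when $r=3$, which gives 3-ULC. For $r=4$ the same difference is $\le (15.30-16+o(1))/n<0$, so \eqref{11} fails at every central $i$, which gives the failure of 4-ULC.

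To get the asymptotic for $\log R_{n,i}$, I would not use the Gaussian local limit form
$$p_{n,i}=\frac{1}{\sqrt{2\pi}\sigma_n}e^{-(i-\mu_n)^2/2\sigma_n^2}\bigl(1+O(n^{-1/2})\bigr)$$
directly. Its second logarithmic difference carries an uncontrolled error of size $O(n^{-1/2})$, which swamps the $1/n$ signal. Instead I would use the saddle-point refinement of the quasi-power estimate \eqref{23} with $A,B$ from \eqref{24}. Since $\varphi$ comes from Getzler's identity \eqref{6}, the analysis in \cite[\S3]{CK} gives \eqref{23} uniformly for complex $t$ near $1$. Moreover, the correction term is a full asymptotic expansion $1+\sum_{k\ge1}c_k(t)n^{-k}$ with $c_k$ analytic, coming from the singularity analysis of the dominant singularity of $\varphi(\cdot,t)$.

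Applying Cauchy's formula $p_{n,i}=\frac{1}{2\pi i}\oint p_n(t)t^{-i-1}dt$ on the circle through the saddle point $\tau=\tau(i/n)$, where $\tau B'(\tau)/B(\tau)=i/n$, yields a large-deviation expansion
$$p_{n,i}=\frac{A(\tau)B(\tau)^n\tau^{-i}}{\sqrt{2\pi n\,\xi(\tau)}}\Bigl(1+\frac{d_1(i/n)}{n}+O(n^{-2})\Bigr),$$
with $\xi$ and $d_1$ analytic near $\tau=1$. Then
$$\log p_{n,i}=-n\,\Psi(i/n)+G(i/n)+\frac{d_1(i/n)}{n}+O(n^{-2}),$$
where $\Psi$ is the Legendre transform of $\log B(e^s)$ and $G,d_1$ are smooth. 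Taking second differences in $i$, the term $-n\Psi(i/n)$ contributes $-\Psi''(i/n)/n+O(n^{-2})$. The terms $G$ and $d_1/n$ contribute $O(n^{-2})$ and $O(n^{-3})$ respectively. Finally $\Psi''(1/2)=1/\sigma_B^2$, and $i/n=\frac12+O(n^{-1/2})$ in the central range, so
$$\log R_{n,i}=\frac{1}{n\sigma_B^2}+O(n^{-3/2}),$$
as required.

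The main obstacle is the uniformity of the error term. The crude remainder $1+O(n^{-1})$ in \eqref{23} is not enough by itself, since a non-smooth $O(1/n)$ error in $\log p_{n,i}$ could have second differences of order $1/n$. I would first try to verify from the explicit form of \eqref{6} that the relevant singularity of $\varphi(z,t)$ is a square-root type branch point depending analytically on $t$. This would make the remainder an analytic function of $t$ expanded in powers of $1/n$, which in turn makes the saddle-point corrections smooth functions of $i/n$. The remaining steps are routine saddle-point estimates.
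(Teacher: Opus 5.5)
Your reduction is correct. In the central range, \eqref{11} amounts to $\log R_{n,i}\ge 4r/n+O(n^{-3/2})$. Also $\sigma_n^{-2}=\frac{6(e-2)}{(3-e)n}+O(n^{-2})$, and $\frac{6(e-2)}{3-e}\approx 15.30\in(12,16)$. This is the same comparison of the Gaussian exponent with $4r/(n-3)$ that the paper makes after invoking the local limit law of \cite{CK}.

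You differ from the paper in how the pointwise estimate is obtained. The paper's deduction, written out for $C[n]$ in the proof of Theorem~\ref{2_1}, uses Hwang's refined local limit theorem (Theorem~\ref{LLT}). Its corrections $\Pi_1(x)/\sqrt n+\Pi_2(x)/n$ are polynomials in $x=(i-\mu_n)/\sigma_n$, and $x$ moves by $\sigma_n^{-1}$ per step. So they change second differences by only $O(n^{-3/2})$, and one gets $\Delta_i^2\log p_{n,i}=-\sigma_n^{-2}+O(n^{-3/2})$ directly. Your objection therefore applies only to the first-order Gaussian law.

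Your saddle-point (large-deviation) expansion is a legitimate alternative. It is heavier, but it gives $\log R_{n,i}=\Psi''(i/n)/n+O(n^{-2})$ on a window $|i/n-\frac12|\le\eta$ of linear width, not just $|i-\frac{n-3}{2}|=O(\sqrt n)$. The obstacle you single out is not serious. $p_n(t)/(A(t)B(t)^n)-1$ is analytic and $O(n^{-1})$ uniformly on a complex neighbourhood of $t=1$. Cauchy's estimates then bound all its derivatives by $O(n^{-1})$ on a smaller disc, so no full expansion in powers of $1/n$ is needed.

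The genuine gap is the aperiodicity estimate. The quasi-power estimate \eqref{23} is only available for $t$ near $1$, but your Cauchy integral runs over the whole circle $|t|=\tau$. Positivity of the coefficients gives only $|p_n(\tau e^{i\theta})|\le p_n(\tau)$. So the arc $\varepsilon\le|\theta|\le\pi$ is bounded only by $p_n(\tau)\tau^{-i}$, which is of order $\sqrt n$ times your main term. You need it to be $o(n^{-1})$ relative to the main term, uniformly for $\tau$ near $1$. This is condition (2) of Theorem~\ref{LLT}, and it is not a formality. The paper's closing Remark shows that for $C[n]$ in the variable $u$, where $|\rho(u)|$ is minimal at both $u=\pm1$, a second saddle point contributes at the same order. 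There both the pointwise asymptotics and log-concavity break down.

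To close the gap, let $\tilde\rho(t)=t^{\frac{1}{t-1}}-1-t^{-1}$, so that $B(t)=(e-2)/\tilde\rho(t)$ in \eqref{24}. Show, as in \eqref{rhoineq}, that for $|t|=\tau$ with $\tau$ near $1$ and $\varepsilon\le|\arg t|\le\pi$, the radius of convergence in $z$ of $\varphi(z,t)$ exceeds some $R>\tilde\rho(\tau)$ uniformly. Cauchy--Hadamard then gives $|p_n(t)|=O(e^{-cn})\,p_n(\tau)$. For instance, at $t=-1$ Getzler's identity \eqref{6} reduces to $(1+\varphi)-(1+\varphi)^{-1}=2z$. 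Its singularities are $z=\pm i$, so the radius is $1>e-2$.
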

The sequence $\binom{n-3}{i}$ of binomial coefficients has variance $\frac{n-3}{4}$ and hence asymptotically
$$\frac{b_{n,2i}}{\binom{n-3}{i}^r}\sim C_n\exp\left( -\frac{(i-\frac{n-3}{2})^2}{2}\left(\frac{1}{\sigma_n^2}-\frac{4r}{n-3} \right) \right)$$
where $C_n$ is a positive constant depending only on $n$.
The corollary follows from the observation that
$$\frac{1}{\sigma_n^2}-\frac{4r}{n-3}\ \ \sim\ \  \frac{4}{n}\left(\frac{6(e-2)}{4(3-e)}-r\right)\ \ >\ \ 0$$
holds for $r=3$ and fails for $r=4$ since $\frac{6(e-2)}{4(3-e)} \approx 3.82$.

Analogously, it was proved in \cite[\S4]{CK} that
the Betti number sequence of the Fulton-MacPherson space $\PP^1[n]$ is also asymptotically normally distributed with the mean $\frac{n}{2}$ and the variance
$$\frac{3-e}{6(e-2)} n+\frac{7e-15}{12e-24}+O(n^{-1}). $$
In the subsequent section, we generalize this to arbitrary smooth projective curves.

We end this section with the following.
\begin{conjecture}
    (1) The even degree Betti sequence $$\{\dim H^{2i}(\mzn/S_n)=\dim H^{2i}(\mzn)^{S_n}\}_{0\le i\le n-3}$$ of $\mzn/S_n$ is log-concave for all $n$.\\
    (2) The even degree Betti sequence of $\mzn/S_n$ is asymptotically normally distributed.
\end{conjecture}
By \cite{CKL2}, it is known that the Betti sequence $\{\dim H^{2i}(\mzn/S_n)\}_{0\le i\le n-3}$ is asymptotically log-concave but not ultra-log-concave.

\bigskip
\section{Fulton-MacPherson space for points on a curve}\label{S4}

In this section, we study the asymptotic properties of the Betti numbers of the Fulton-MacPherson space $C[n]$ for any smooth projective curve $C$. We prove two main results: Theorem \ref{2}, which establishes that the Betti numbers are asymptotically normally distributed, and Theorem \ref{2_1}, which demonstrates that the separated even and odd subsequences of Betti numbers are asymptotically ultra-log-concave.

\medskip

Let $C$ be a smooth projective curve of genus $g$. The Fulton-MacPherson space $C[n]$ for $n$ points on $C$ is a smooth projective variety of dimension $n$ that
compactifies the configuration space $C[n]^\circ$ in \eqref{1} of $n$ ordered distinct points in $C$ whose complement $C[n]-C[n]^\circ$ is a normal crossing divisor.
We can also think of $C[n]$ as the moduli space $\ocM_{g,n}(C,1)$ of stable maps to $C$ of degree $1$.

The Betti numbers of $C[n]$ can be computed as follows. Let
\[\psi(z, u) = 1+ \sum_{n\ge 1} \frac{z^n}{n!}\sum_{k\ge 0} u^k \dim H^k(C[n]).  \]
Then, by \cite[Theorem IV.4.3.1]{Manin}, we have
\beq\label{26}
\psi(z, u) = \left( 1+\varphi (z, u)\right)^{u^2+2g u +1} \eeq
where $\varphi(z,u)$ is defined by \eqref{5}. Since we can compute $\varphi(z,u)$ by \eqref{6} or \eqref{8}, the Betti numbers of $C[n]$ are obtained by expanding \eqref{26}. In fact, \eqref{26} is equivalent to the recursive formula
\beq\label{eq:fmrecur} f_{n+1}(u) = (u^2 + 2gu + 1) f_{n}(u) + u^2 \sum_{j=3}^{n+1} \binom{n}{j-1} P_{j}(u) f_{n+2-j}(u),\eeq
where $f_n(u) = \sum_{k= 0}^{2n} u^k \dim H^k(C[n])$ denotes the Poincar\'e polynomial of $C[n]$. Indeed, by differentiating both \eqref{6} and \eqref{26} with respect to $z$ and combining them, we deduce that $\psi$ satisfies the differential equation
\[ (1+u^2z -u^2\varphi) \psi_z =(1+2gu +u^2 )\psi,  \]
which is equivalent to the recursive formula \eqref{eq:fmrecur}.

To prove Theorem \ref{2} by Theorem \ref{22}, we have to estimate the coefficient of $z^n$ in $\psi(z,u)$. To this end, we first analyze the singularity structure of $\psi$ near $u=1$. (cf. \cite[Section IX.5]{FS}) It turns out that any singular point of $\psi$ in this neighborhood necessarily arises from a singularity of $\varphi$. We apply the inverse function theorem to analyze the latter.

Recall that $\varphi$ can be regarded as the inverse of
\[ z=\frac{u^4\varphi-(1+\varphi)^{u^2}+1}{u^2(u^2-1)}.\]

By the inverse function theorem, singularities of $\varphi$ can only occur when the derivative $ \frac{\partial z}{\partial \varphi}$ vanishes:
\[\frac{\partial z}{\partial \varphi}= \frac{u^4-u^2(1+\varphi)^{u^2-1}}{u^2(u^2-1)}=0.  \]
Thus, we find that the location of the singularity is
\beq\label{eq:sing} z= \rho(u) := u^{\frac{2}{u^2-1}}- \frac{u^2+1}{u^2}, \hspace{1em} \varphi = \lambda(u):= u^\frac{2}{u^2-1}-1. \eeq
The second derivative at the singularity \eqref{eq:sing} is
\beq\label{eq:phi''}\frac{\partial^2 z}{\partial \varphi^2}(z, u)=-u^{\frac{2u^2-4}{u^2-1}}  \eeq
Since this is nonzero, the singularity is of the square-root type (cf. \cite[IX.7.3.]{FS}).

By differentiating \eqref{26} with respect to $z$, we have
\[
\frac{\partial \psi}{\partial z}=(u^2+2gu +1)(1+\varphi(z, u))^{u^2+2g u}\frac{\partial \varphi}{\partial z}(z, u).\]
Hence
\beq\label{7}\frac{\partial z}{\partial \psi}=0 \;\Leftrightarrow \;
\frac{\partial z}{\partial \varphi}(z, u)=0 \;\Leftrightarrow \;
z=\rho(u),\; \varphi=\lambda(u), \; \psi=\gamma(u)\eeq
in a neighborhood of $u=1$, where
$\gamma(u)=u^{\frac{2u^2+4gu+2}{u^2-1}}$.
Moreover, when $\frac{\partial z}{\partial \varphi}=0$, we have $\frac{\partial^2 z}{\partial \psi^2} = \frac{\partial^2 z}{\partial \varphi^2} \left(\frac{\partial \varphi}{\partial \psi}\right)^2$. Hence, by \eqref{eq:phi''},
\[
\frac{\partial^2z}{\partial \psi^2}\bigg|_{\psi=\gamma(u)}=-\frac{1}{(u^2+2gu+1)^2}u^{-\frac{2u^2+8gu+4}{u^2-1}}.
\]
By the Taylor expansion at \eqref{7}, we then have
\[
z-\rho(u) =-\frac{u^{-\frac{2u^2+8gu+4}{u^2-1}}}{2(u^2+2gu+1)^2}
(\psi-\gamma(u))^2+O\left(|\psi-\gamma(u)|^3\right).\]
Inverting this gives
\[
\psi-\gamma(u) = -\sqrt{2} (u^2+2gu+1) u^{\frac{u^2+4gu+2}{u^2-1}}\rho(u)^\frac12
\left(1-\frac{z}{\rho(u)}\right)^\frac12+O\left(\left|1-\frac{z}{\rho(u)}\right|\right).\]
Hence, the coefficient of $z^n$ in $\psi(z,u)$ is
\[ [z^n]\psi(z,u) = \frac{1}{\sqrt{2\pi}}(u^2+2gu+1) u^{\frac{u^2+4gu+2}{u^2-1}}\rho(u)^{\frac12-n}n^{-\frac32} +O(\rho(u)^{-n}n^{-\frac52}). \]
Then we may apply \cite[Theorem VI.4]{FS} 
to find that
\[  \frac{[z^n]\psi(z,u)}{[z^n]\psi(z,1)}=
\frac{(u^2+2gu+1) u^{\frac{u^2+4gu+2}{u^2-1}}}{(2+2g)e^{2g+\frac32}} \left(\frac{\rho(u)}{e-2}\right)^{\frac12 -n}  \left(1+O(n^{-1})\right).\]
Therefore, we may apply Theorem \ref{22} with
\[A(u)=\frac{(u^2+2gu+1) u^{\frac{u^2+4gu+2}{u^2-1}}}{(2+2g)e^{2g+\frac32}}\left(\frac{\rho(u)}{e-2}\right)^{\frac12}, \ \  B(u)=\frac{e-2}{\rho(u)}\]
and conclude that the Betti numbers of $C[n]$ are asymptotically normally distributed. The mean is $n$ and the asymptotic variance is
\[ \sigma_n^2 = \frac{2(3-e)}{3(e-2)}n+ \frac{4g^2-5g-15-e(2g^2-2g-7)}{(3g+3)(e-2)} + O(n^{-1}).\]
This completes our proof of Theorem \ref{2}.

\bigskip

\begin{figure*}[t]
    \makebox[\textwidth][c]{
        \begin{minipage}{1.15\textwidth}
            \centering
            \begin{subfigure}{0.48\linewidth}
                \centering
            \includegraphics[width=\linewidth]{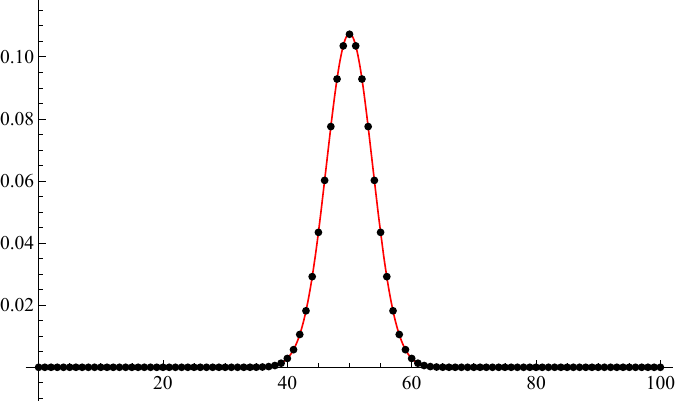}
                \caption{$g=1, \ n=50$ }
                \label{fig:left}
            \end{subfigure}
            \hfill
            \begin{subfigure}{0.48\linewidth}
                \centering
                \includegraphics[width=\linewidth]{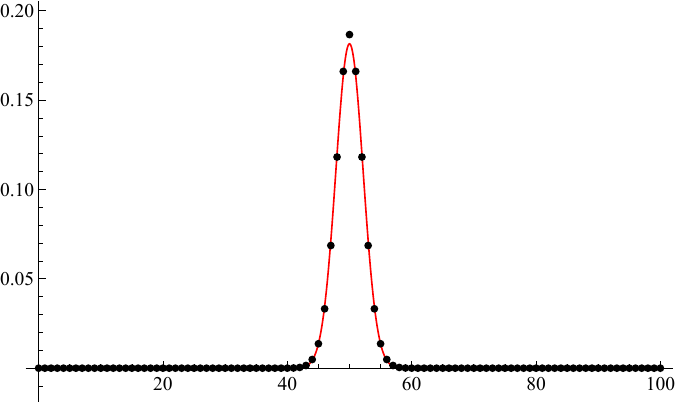}
                \caption{$g=20, \ n=50$}
                \label{fig:right}
            \end{subfigure}
        \end{minipage}
    }
    \caption{Betti number distribution for $C[50]$}
    \label{fig:main}
\end{figure*}

Figure \ref{fig:main} displays the normalized Betti
numbers (black dots) of $C[50]$ when $g=1$ and $g=20$, respectively, together with the probability density function of the corresponding normal distribution (red curve), showing close agreement.

\begin{remark}
When $g=0$, Theorem \ref{2} recovers \cite[Theorem 3.1]{CK}. Note that the mean and the variance are multiplied by 2 and 4, respectively, because we now include odd degree Betti numbers in our consideration. Even though these odd degree Betti numbers vanish, Theorem \ref{22} still applies and the asymptotic normality holds. Recall that the asymptotic normality is the convergence of the distribution functions. However, the local limit theorem fails in this setting when $g=0$, precisely due to the vanishing odd degree Betti numbers. 
\end{remark}

We now turn our attention to local properties: the local limit theorem and the asymptotic log-concavity. Proving asymptotic log-concavity requires highly precise pointwise estimates of the Betti numbers. However, as can be seen from the genus zero example, the local limit theorem does not hold in general and the Betti numbers exhibit a parity-dependent oscillation. Analytically, this is because the modulus $|\rho(u)|$ does not attain a unique minimum on the unit circle $|u|=1$ at $u=1$. Indeed, because $\rho(u)$ is a function of $u^2$, the minimum modulus is attained simultaneously at two antipodal points, $u=\pm 1$. 

To overcome this periodicity, we consider the subsequences of even and odd Betti numbers separately and apply the change of variable $w=u^2$. Define the generating functions for the even and odd Betti numbers as
\[ \psi^\mathrm{even}(z, u) = \frac{\psi(z, u)+ \psi(z, -u)}{2} \and \psi^\mathrm{odd}(z, u) = \frac{\psi(z, u)- \psi(z, -u)}{2u}. \]
By construction, both functions involve only even powers of $u$. We apply the change of variables $w=u^2$:
\[ \Psi^\mathrm{even}(z, w) = \psi^\mathrm{even}(z, \sqrt{w})  \and \Psi^\mathrm{odd}(z, w) =  \psi^\mathrm{odd}(z, \sqrt{w}) .  \]
These are the generating series of the even and odd Betti numbers, respectively. By this change of variables, the dominant singularity of these functions in the $z$-plane is given by \[\tilde{\rho}(w):=\rho(\sqrt{w}) = w^{\frac{1}{w-1}}- \frac{w+1}{w},\]
To analyze these separated subsequences, we employ the following local limit thoerem developed by Hwang.


\begin{theorem}\cite[Theorem 1]{HwangLLT}\label{LLT}
  Let $\{\Omega_n\}$ be a sequence of integer-valued random variables with moment generating functions $M_n(s)=\EE[e^{\Omega_n s}]$. Suppose there exist $\delta>0$ and analytic functions $u(s)$, $v(s)$ such that uniformly for $|s|<\delta$,
  \beq\label{eq:mgf} M_n(s) = e^{n u(s) +v(s)}\left(1+ O(n^{-1}) \right), \quad n\to \infty, \eeq
  satisfying the following conditions:
  \begin{enumerate}
    \item $u(s)$ and $v(s)$ are independent of $n$ and analytic for $|s|\le \delta$ with $u''(0)\ne 0$.
    \item \emph{(Aperiodicity condition)} For a fixed $0<\epsilon<\delta$, there exists $c>0$ such that
    \[ \left| \frac{M_n(r+it)}{M_n(r)}\right|=O(e^{-cn})\]
    uniformly for $|r|\le \delta$ and $\varepsilon \le |t|\le \pi$.
  \end{enumerate}
  Then, letting $\mu_n = n u'(0) +v'(0) $ and $\sigma_n^2 = n u''(0)$, for any integer $m$ in the central region such that $x =\frac{m-\mu_n}{\sigma_n}=O(1)$, the probabilities satisfy asymptotically
  \[ \PP(\Omega_n =m) = \frac{e^{-x^2/2}}{\sqrt{2 \pi \sigma_n^2}}\left( 1 + \frac{\Pi_1(x) }{\sqrt{n}} + \frac{\Pi_2(x) }{n}+ O\left( n^{-\frac{3}{2}}\right) \right),\]
  where $\Pi_1(x) $ and $\Pi_2(x) $ are polynomials.
\end{theorem}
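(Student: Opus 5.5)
Since Theorem~\ref{LLT} is quoted from \cite{HwangLLT}, we only sketch the standard saddle-point argument behind it. We would start from Fourier inversion for an integer-valued random variable. For any real $r$ with $|r|\le\delta$,
\[
\PP(\Omega_n=m)=\frac{1}{2\pi}\int_{-\pi}^{\pi} M_n(r+it)\,e^{-m(r+it)}\,dt .
\]
We choose $r$ by exponential tilting, i.e.\ as the real solution of $n u'(r)+v'(r)=m$. This solution exists and is unique for large $n$ because $u''(0)\neq 0$; note that $u''(0)>0$ since it is a limiting variance. Since $x=(m-\mu_n)/\sigma_n=O(1)$, the implicit function theorem gives $r=\frac{x}{\sqrt{n u''(0)}}+O(n^{-1})$, so in particular $|r|<\delta$ for large $n$.

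We then split the $t$-integral into $|t|<\varepsilon$ and $\varepsilon\le|t|\le\pi$.

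\textbf{Outer range.} The aperiodicity condition (2) bounds the integrand by $O(e^{-cn})\,M_n(r)e^{-mr}$. This is exponentially smaller than the main term, which is of order $n^{-1/2}M_n(r)e^{-mr}$.

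\textbf{Inner range.} We insert \eqref{eq:mgf} and Taylor expand $n\bigl(u(r+it)-u(r)\bigr)+v(r+it)-v(r)-imt$ in $t$. The choice of $r$ kills the linear term, leaving
\[
-\tfrac12 n u''(r)t^2-\tfrac{i}{6}nu'''(r)t^3+\tfrac1{24}nu''''(r)t^4+O(t^2)+\cdots .
\]
We restrict further to $|t|\le n^{-1/2+\eta}$, where these higher terms are small. Expanding $\exp$ of them and integrating term by term against the Gaussian $e^{-nu''(r)t^2/2}$ gives
\[
\frac{1}{\sqrt{2\pi n u''(r)}}\Bigl(1+\frac{a_1}{n}+\cdots\Bigr).
\]
The odd powers of $t$ integrate to zero. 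The intermediate band $n^{-1/2+\eta}\le|t|\le\varepsilon$ contributes only $O(e^{-n^{2\eta}/C})$ relative error, because $\mathrm{Re}\,u(r+it)-u(r)\le -c't^2$ there.

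\textbf{Untilting.} Finally we rewrite $M_n(r)e^{-mr}=\exp\bigl(nu(r)+v(r)-mr\bigr)(1+O(n^{-1}))$ in terms of $x$. Expanding $u,v$ around $0$ with $r=r(x,n)$ gives
\[
nu(r)+v(r)-mr=-\tfrac{x^2}{2}+\frac{q_1(x)}{\sqrt n}+\frac{q_2(x)}{n}+\cdots
\]
for polynomials $q_1,q_2$. Similarly $u''(r)=u''(0)\bigl(1+O(x/\sqrt n)\bigr)$. Collecting all these expansions yields the stated form, with polynomials $\Pi_1,\Pi_2$ built from the cumulants $u^{(j)}(0)$ and $v^{(j)}(0)$.

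The main obstacle is bookkeeping the error factor $1+O(n^{-1})$ in \eqref{eq:mgf}. The plan is to regard it as an analytic function $1+\epsilon_n(s)$ on $|s|\le\delta$ with $\sup|\epsilon_n|=O(n^{-1})$. By Cauchy's estimates, its derivatives are $O(n^{-1})$ on a smaller disc, so along the inner range $\epsilon_n(r+it)=\epsilon_n(r)+O(|t|/n)$. After Gaussian integration the linear term contributes only $O(n^{-3/2})$, so $\epsilon_n(r)$ behaves like a constant relative correction of size $O(n^{-1})$. To reach the stated relative accuracy $O(n^{-3/2})$ with polynomial coefficients, we expect to need (and would verify in Hwang's hypotheses) that this correction has the form $w(s)/n+O(n^{-2})$ with $w$ analytic, so that it can be absorbed into $\Pi_2$. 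The Gaussian and Edgeworth-type computations themselves are routine.
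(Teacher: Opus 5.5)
The paper gives no proof of this statement; it quotes the theorem from Hwang and only applies it. Your outline is the standard route: inversion on the line $\mathrm{Re}\, s=r$, tilting so that $nu'(r)+v'(r)=m$, a Gaussian expansion near $t=0$, and aperiodicity elsewhere.

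\textbf{Main gap: the error factor.} As written, your argument stops short of the claimed precision. You treat $\epsilon_n(r)$ as an unstructured relative error of size $O(n^{-1})$ and propose an extra hypothesis $\epsilon_n=w(s)/n+O(n^{-2})$. Without that hypothesis you only get $1+\Pi_1(x)/\sqrt{n}+O(n^{-1})$. This is weaker than the statement, and it would not suffice for the proof of Theorem~\ref{2_1}: there an unstructured $O(n^{-1})$ remainder can have second difference of order $n^{-1}$, the same order as $-1/\sigma_n^2+4r/n$.

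No extra hypothesis is needed, because $M_n(0)=\EE[1]=1$.
\begin{itemize}
\item From $e^{nu(0)+v(0)}(1+\epsilon_n(0))=1$ for all $n$, taking the ratio for consecutive $n$ gives $e^{u(0)}=1$, then $e^{v(0)}=1$.
\item Hence $\epsilon_n(0)=0$ exactly, and you may take $u(0)=v(0)=0$.
\item Since $\sup_{|s|<\delta}|\epsilon_n(s)|=O(n^{-1})$, Cauchy's estimate and $r=O(n^{-1/2})$ give
\[
\epsilon_n(r)=\epsilon_n(r)-\epsilon_n(0)=O\bigl(|r|/n\bigr)=O\bigl(n^{-3/2}\bigr).
\]
\end{itemize}
Combined with your bound $\epsilon_n(r+it)=\epsilon_n(r)+O(|t|/n)$ on the central arc, the whole error factor contributes only $O(n^{-3/2})$ relatively. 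Your claim that $\epsilon_n(r)$ is a constant relative correction of size $O(n^{-1})$ is therefore too pessimistic. With this fix, your remaining expansions do produce $n$-independent polynomials $\Pi_1,\Pi_2$ as claimed.

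\textbf{Smaller point: the choice of $\varepsilon$.} You use the bound $\mathrm{Re}\,u(r+it)-u(r)\le -c't^2$ on $n^{-1/2+\eta}\le|t|\le\varepsilon$. This bound holds only for $|t|\le\varepsilon_0$, with $\varepsilon_0$ depending on $u$. So you need condition (2) for some $\varepsilon\le\varepsilon_0$; that is, (2) must be read as holding for every small $\varepsilon$, which is how the paper verifies it.

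For a single fixed $\varepsilon$ the statement is in fact false. Take $\Omega_n=3X_n$ with $X_n$ Poisson of mean $n$, so $M_n(s)=\exp\bigl(n(e^{3s}-1)\bigr)$. Set $u(s)=e^{3s}-1$, $v=0$, $\delta=3$, $\varepsilon=5/2$. Then $|M_n(r+it)/M_n(r)|=\exp\bigl(ne^{3r}(\cos 3t-1)\bigr)$ is exponentially small for $5/2\le|t|\le\pi$. Yet $\PP(\Omega_n=m)=0$ unless $3\mid m$. Correspondingly, $\mathrm{Re}\,u(r+it)=u(r)$ at $t=2\pi/3<\varepsilon$, so your quadratic bound fails there.
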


We apply Theorem \ref{LLT} to the even Betti numbers by considering the normalized coefficients as a probability generating function for a random variable. (The argument for odd Betti numbers is parallel.) The probability generating function for the even subsequence is defined by
\[ \tilde{P}_n(w) :=  \frac{ [z^n]\Psi^\mathrm{even}(z, w) }{[z^n]\Psi^\mathrm{even}(z, 1)} \]

To evaluate the numerator, observe that $[z^n]\Psi^\mathrm{even}(z, w) = \frac{1}{2}\big([z^n]\psi(z, \sqrt{w}) + [z^n]\psi(z, -\sqrt{w})\big)$. The asymptotics of the first term are derived above. Similarly as before, we can derive the corresponding asymptotic formula for $\psi(z, -u)$ as
\[ [z^n]\psi(z,-u) = \frac{1}{\sqrt{2\pi}}(u^2-2gu+1) u^{\frac{u^2-4gu+2}{u^2-1}}\rho(u)^{\frac12-n}n^{-\frac32} +O(\rho(u)^{-n}n^{-\frac52}). \]
Using the transfer theorem for algebraic singularities, we combine these to obtain
\[ \tilde{P}_n(w) =\tilde{A}(w) \left(\frac{\tilde{\rho}(w)}{e-2}\right)^{-n}  \left(1+O(n^{-1})\right),\]
where the amplitude function $  \tilde{A}(w)  $ is given by
\[  \tilde{A}(w) = \frac{(w+2g\sqrt{w} +1) w^{\frac{w+4g\sqrt{w} +2}{2(w-1)}}+(w-2g\sqrt{w} +1) w^{\frac{w-4g\sqrt{w} +2}{2(w-1)}}}{(2+2g)e^{2g+\frac32}+(2-2g)e^{-2g+\frac32}} \left(\frac{\tilde{\rho}(w)}{e-2}\right)^{\frac12} \]
Then the moment generating function $M_n(s) =\tilde{P}_n(e^s)$ is of the form \eqref{eq:mgf}, where
\[ u(s) = -\log \frac{\tilde{\rho}(e^s)}{e-2}\quad \text{and} \quad v(s) = \log\tilde{A}(e^s). \]
Since $\tilde{\rho}(w)$ and $\tilde{A}(w)$ are analytic near $w=1$, $u(s)$ and $v(s)$ are analytic near $s=0$. One can compute $u'(0)= \frac12$, $u''(0)= \frac{3-e}{6(e-2)}\ne 0$ and $v'(0)=0$.

It remains to verify the aperiodicity condition. Recall that by the change of variables $w= u^2$, $|\tilde{\rho}(w)|$ attains its unique minimum at $w=1$ along the unit circle $|w|=1$. Consequently,
\beq \label{rhoineq} \min_{w\in \Gamma} |\tilde{\rho}(w) |> \tilde{\rho}(1)= e-2 \eeq
for any closed arc $\Gamma$ on the unit circle $|w|=1$ not containing $w=1$.

Let $c_n(w) = [z^n]\Psi^\mathrm{even}(z, w) $ so that $ \left| \frac{M_n(r+it)}{M_n(r)}\right|=\left| \frac{c_n(e^{r+it})}{c_n(e^r)}\right| $. Similarly as before, one can see for real $r$ in a sufficiently small compact set $[-\delta, \delta]$,
$$c_n(e^r) = \tilde{A}(e^r) \tilde{\rho}(e^r)^{-n} n^{-\frac32} \left(1 + O(n^{-1})\right).$$
Because $\tilde{A}(e^r) $ is continuous and nonvanishing, there is a constant $\tilde{C}>0$ independent of $n$ and $r$ such that $|c_n(e^r)| \ge \tilde{C} \tilde{\rho}(e^r)^{-n} n^{-\frac32}$, uniformly for $|r|\le\delta$ and large $n$.

For $w= e^{r+it}$, the radius of convergence of the power series $\Psi^\mathrm{even}(z, w) $ in $z$ is exactly $ |\tilde{\rho} (e^{r+it}) |$. By \eqref{rhoineq} and the continuity of $|\tilde{\rho}(w)|$, we can choose $R$ independent of $t$ such that
$$\tilde{\rho}(e^r) < R < \min_{\varepsilon \le |t| \le \pi} |\tilde{\rho}(e^{r+it})|.$$
Since the radius of convergence is strictly greater than $R$, by the Cauchy-Hadamard theorem, $|c_n(e^{r+it})|= O(R^{-n})$. Taking the ratio, we have
\[\left| \frac{M_n(r+it)}{M_n(r)}\right|\le \frac{O(R^{-n})}{\tilde{C}\tilde{\rho}(e^r)^{-n} n^{-\frac32}} =O(e^{-cn}) \]
for some constant $c>0$, because $\tilde{\rho}(e^r) < R $. Therefore, the conditions of Theorem \ref{LLT} are satisfied.

\medskip 

We are now ready to prove Theorem \ref{2_1}.
\begin{proof}[Proof of Theorem \ref{2_1}]
  Let $\tilde{p}_{n,i}= [w^i]\tilde{P}_n(w) $. Note that $\tilde{p}_{n,i}= \beta_{n,2i} / \sum_j \beta_{n,2j}$ is the normalized probability corresponding to the even Betti numbers. Hence, to show the sequence $\{\tilde{p}_{n,i}\}$ is asymptotically $r$-ULC, it suffices to show that the second discrete difference of the logarithmic ratio
  \[\Delta^2_i \ln \left(\frac{\tilde{p}_{n,i}}{\binom{n}{i}^r}\right) =\ln \left(\frac{\tilde{p}_{n,i-1}}{\binom{n}{i-1}^r}\right)-2 \ln \left(\frac{\tilde{p}_{n,i}}{\binom{n}{i}^r}\right)+\ln \left(\frac{\tilde{p}_{n,i+1}}{\binom{n}{i+1}^r}\right) \]
  is asymptotically strictly negative.
  By Theorem \ref{LLT}, for $i$ in the central region $|i - \frac{n}{2}|=O(\sigma_n)=O(\sqrt{n})$,
  \[ \Delta^2_i \ln \tilde{p}_{n,i} = - \frac{1}{\sigma_n^2}+ O\left( n^{-\frac{3}{2}}\right), \]
  and since $\Delta^2_i \ln {\binom{n}{i}^r}= - \frac{4r}{n}+ O\left( n^{-2}\right)$, the required inequality evaluates to
  \[\Delta^2_i \ln \left(\frac{\tilde{p}_{n,i}}{\binom{n}{i}^r}\right) = - \frac{1}{\sigma_n^2}+\frac{4r}{n}+O\left( n^{-\frac{3}{2}}\right)<0.\]
  Because the error term $O\left( n^{-\frac{3}{2}}\right)$ decays asymptotically faster than the main term of order $n^{-1}$, the inequality holds for sufficiently large $n$ if and only if $r< \frac{n}{4\sigma_n^2}$. Evaluating the threshold ratio, we get
  \[ \frac{n}{4\sigma_n^2} = \frac{1}{4 u''(0)}= \frac{6(e-2)}{4 (3-e)}\approx 3.82. \]
  Therefore the subsequence is asymptotically 3-ULC but not 4-ULC. The proof for the subsequence of the odd Betti numbers is parallel.
\end{proof}

\begin{remark}
  We remark that the local limit theorem fails for the complete unseparated sequence of Betti numbers $\{\beta_{n,k}\}$. Even though the even and odd subsequences share the same mean and variance, they have distinct amplitude functions $\tilde{A}(w)$. Consequently, the pointwise asymptotic formula for $\beta_{n,k}$ exhibits a parity-dependent oscillation, switching between two distinct amplitudes. Hence we do not expect asymptotic log-concavity to hold for the unseparated sequence of Betti numbers.
\end{remark}


\bigskip

\bibliographystyle{amsplain}

\end{document}